\newtheorem{theorem}{Theorem}[section]
\newtheorem{lemma}[theorem]{Lemma}
\theoremstyle{definition}
\newtheorem{remark}[]{\textbf{Remark}}
\numberwithin{equation}{section}
\begin{document}
\title{ SUBCONVEXITY FOR $GL(3)\times GL(2)$ TWISTS } 
\author{Prahlad Sharma\vspace{2mm}\\\MakeLowercase{with an appendix by} Will Sawin}
\address{School of Mathematics, Tata Institute of Fundamental Research, Mumbai}
\email{prahlad@math.tifr.res.in}
\address{Department of Mathematics, Columbia University, 2990 Broadway, New York, NY 10027, USA}
\email{sawin@math.columbia.edu}
\maketitle
\begin{abstract} Let $\pi$ be a $SL(3,\mathbb{Z})$ Hecke-Maass cusp form, $f$  be a $SL(2,\mathbb{Z})$ holomorphic cusp form or Maass cusp form  and $\chi$ be any non-trivial character $\bmod \, p$, where $p$ is prime. We show that the $L$-function associated with this triplet satisfy
\begin{equation*}
L\left(\frac{1}{2},\pi\times f\times\chi\right)\ll_{\pi,f,\epsilon}
p^{\frac{3}{2}-\frac{1}{16}+\epsilon}.
\end{equation*}
The method also yields the subconvex bound
\begin{equation*}
L\left(\frac{1}{2},\pi\otimes \chi\right)\ll_{\pi,\epsilon }p^{\frac{3}{4}-\frac{1}{32}+\epsilon }.
\end{equation*}
\end{abstract}
\vspace{5mm}
\section{Introduction}
Let $\pi$ be a $SL(3,\mathbb{Z})$ Hecke-Maass cusp form, $f$ be a $SL(2,\mathbb{Z})$ holomorphic Hecke cusp form  and $\chi$ be any non-trivial Dirichlet character $\bmod \,p$. Throughout this paper, $p$ always denotes a prime number. The Rankin-Selberg convolution of $\pi$ with $f\times\chi$ is given by
\begin{equation}\label{rank}
L(s,\pi\times f\times \chi)=\mathop{\sum\sum}_{r,n=1}^{\infty}\frac{\lambda_{\pi}(r,n)\lambda_{f}(n)\chi(nr^2)}{(nr^2)^s},
\end{equation} 
which converges absolutely for $\Re(s)>1$. The series extends to an entire function and satisfies a functional equation of the Riemann type and has arithmetic conductor $p^6$ (see \cite{booker}, Lemma 3.8 and Theorem 3.9). Consequently, the Phragmen-Lindel\"{o}f principle yields the convexity bound
\begin{equation*}
L(1/2,\pi\times f\times \chi)\ll_{\pi,f,\epsilon}p^{\frac{3}{2}+\epsilon}.
\end{equation*} The Lindel\"{o}f hypothesis asserts that the exponent $3/2+\epsilon$ can be replaced by any positive number. But even breaking the convexity barrier is hard and has remained open so far. In this paper we prove the following subconvex bound.
\begin{theorem}\label{thm}
Let $\pi$ be a $SL(3,\mathbb{Z})$ Hecke-Maass cusp form, $f$ be a $SL(2,\mathbb{Z})$ holomorphic Hecke cusp form or Hecke-Maass cusp form and $\chi$ be any non-trivial character $\bmod \,p$, then 
\begin{equation}
L\left(\frac{1}{2},\pi\times f\times\chi\right)\ll_{\pi,f,\epsilon} p^{\frac{3}{2}-\frac{1}{16}+\epsilon}.
\end{equation}
\end{theorem} We note that  Lin-Michel-Sawin \cite{lin} generalised Theorem \ref{thm} by replacing $\chi$ by a generic \textit{trace function}.
\\
Note that one has
\begin{equation}\label{twist1}
L^2(s,\pi\otimes \chi)=\left(\sum_{n=1}^{\infty}\frac{\lambda_{\pi}(1,n)\chi(n)}{n^s}\right)^2=\mathop{\sum\sum}_{r,n=1}^{\infty}\frac{\lambda_{\pi}(r,n)d(n)\chi(nr^2)}{(nr^2)^s},
\end{equation} and 
\begin{equation}\label{twist2}
L^3(s,f\otimes \chi)=\left(\sum_{n=1}^{\infty}\frac{\lambda_f(n)\chi(n)}{n^s}\right)^3=\mathop{\sum\sum}_{r,n=1}^{\infty}\frac{\lambda_{\min}(r,n)\lambda_f(n)\chi(nr^2)}{(nr^2)^s},
\end{equation} where 
\begin{equation*}
\lambda_{\min}(r,n)=\sum_{d|(r,n)}\mu (d)d_3(r/d)d_3(n/d)
\end{equation*} are the coefficients of the minimal Eisenstein series for $SL(3,\mathbb{Z})$. These are same as \eqref{rank} when $f=E(z,1/2)$ or when $\pi$ is the minimal Eisenstein series for $SL(3,\mathbb{Z})$. In fact, our proof goes through for these variations of $L$-functions as well. All we need is the fact that \eqref{twist1} and \eqref{twist2} satisfies a functional equation with arithmetic conductor $p^6$ and the corresponding Voronoi summation formulas for $d(n)$ and $d_3(n)$. Hence, we also obtain the following, which is an improvement over the subconvex bound obtained in Holowinsky-Nelson \cite{nelson} and in  \cite{munshi9} :
\begin{theorem}\label{•}
Let $\pi$ be a $SL(3,\mathbb{Z})$ Hecke-Maass cusp form and $\chi$ be any non-trivial character $\bmod\,p$, then
\begin{equation*}
L(1/2,\pi\otimes \chi)\ll_{\pi,\epsilon }p^{3/4-1/32+\epsilon }.
\end{equation*}
\end{theorem}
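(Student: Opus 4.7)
The plan is to transfer the bound of Theorem 1.1 directly to $L^2(1/2,\pi\otimes\chi)$ via identity (1.2), then extract $L(1/2,\pi\otimes\chi) \ll p^{3/4-1/32+\epsilon}$ by taking a square root. Indeed,
$$L^2(s,\pi\otimes\chi) = \mathop{\sum\sum}_{r,n\geq 1} \frac{\lambda_\pi(r,n)\, d(n)\, \chi(nr^2)}{(nr^2)^s}$$
is structurally identical to the Dirichlet series (1.1), differing only in that the Hecke eigenvalues $\lambda_f(n)$ are replaced by the divisor function $d(n)$. Moreover, as noted in the excerpt, $L^2(s,\pi\otimes\chi)$ satisfies a functional equation with arithmetic conductor $p^6$, matching that of $L(s,\pi\times f\times\chi)$.

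The first step is to apply the approximate functional equation, reducing the problem to bounding the dyadic sums
$$S_r(N) = \sum_{n\geq 1} \lambda_\pi(r,n)\, d(n)\, \chi(n)\, V(n/N), \qquad r^2 N \ll p^{3+\epsilon},$$
uniformly in $r$ and dyadic $N$. If one can establish $S_r(N) \ll (N/r)\, p^{-1/16+\epsilon}$, then summing over $r$ and $N$ produces $L^2(1/2,\pi\otimes\chi) \ll p^{3/2-1/16+\epsilon}$, from which the theorem follows.

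The estimation of $S_r(N)$ proceeds exactly as for the corresponding sum in Theorem 1.1: insert a delta symbol (DFI/Munshi style) to separate the three arithmetic objects; apply $GL(3)$-Voronoi to the $\lambda_\pi(r,\cdot)$ sum; apply Voronoi summation for $d(n)$ in place of $GL(2)$-Voronoi for $\lambda_f$; apply Poisson summation to the character twist. A Cauchy--Schwarz step then detaches the $GL(3)$ coefficients from the remaining variables, and a further Poisson on the outer length reduces the off-diagonal to character sums and Kloosterman-type sums modulo $p$. Balancing the parameters identically to the $GL(3)\times GL(2)$ case yields the saving $p^{-1/16+\epsilon}$ in the squared $L$-function.

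The main obstacle will be verifying that Voronoi for $d(n)$ plugs into the analytic architecture of Theorem 1.1 without loss. The Voronoi identity for $d(n)$ has a main-term contribution (arising from the double pole of $\zeta(s)^2$) that must be treated separately, and its oscillatory part produces Kloosterman sums with transforms involving both the $K$- and $Y$-Bessel functions, rather than the single $J$-kernel that arises for a holomorphic cusp form. However, since $d(n)\ll n^\epsilon$ satisfies the Ramanujan bound trivially — indeed more strongly than the cusp-form bound for $\lambda_f$ — no quantitative loss is incurred in the subsequent large-sieve or second-moment steps, so the saving $1/16$ is recovered for $L^2(1/2,\pi\otimes\chi)$, giving $1/32$ for $L(1/2,\pi\otimes\chi)$.
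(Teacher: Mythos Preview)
Your proposal is correct and follows essentially the same route as the paper: it reduces Theorem~1.2 to Theorem~1.1 via the identity \eqref{twist1}, replaces $\lambda_f(n)$ by $d(n)$ (using the Voronoi formula for $d(n)$ in place of the $GL(2)$ Voronoi), and takes a square root of the resulting bound $p^{3/2-1/16+\epsilon}$ on $L^2(1/2,\pi\otimes\chi)$. One minor remark: the intermediate bound you state, $S_r(N)\ll (N/r)p^{-1/16+\epsilon}$, is not quite the shape the paper obtains---the actual estimate for $S_r(N)/N^{1/2}$ has several competing terms and the exponent $1/16$ only emerges after balancing the truncation parameter $\theta$ and the amplifier length $L=p^{\eta}$ as in Section~7---but this does not affect the overall strategy.
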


Let us briefly recall the history of subconvexity of $L$-functions. The first subconvex bound ($t$-aspect) for the Riemann Zeta function was obtained by G.H. Hardy and J.E. Littlewood, based on the work of Weyl  \cite{weyl}. The $q$-aspect subconvexity was first proved by D.A. Burgess. Using cancellation in character sums in short intervals, he proved that $$L\left(\frac{1}{2},\chi\right)\ll_{\epsilon} q^{\frac{3}{16}+\epsilon}. $$
D.R. Heath-Brown \cite{heath} proved the hybrid subconvexity, in both $(q,t)$-aspect, for Dirichlet $L$-functions. Since then, several improvements have been made of the above subconvex bounds.

For $GL(2)$ $L$-functions, $t$-aspect subconvexity was first proved by A. Good  \cite{good} for holomorphic forms using the spectral theory of automorphic forms. T. Meurman  \cite{meurman} then proved the result for Maass cusp forms. The $q$-aspect subconvexity for $GL(2)$ $L$-functions was first obtained by Duke-Friedlander-Iwaniec using a new form of circle method. Assuming $\chi$ to be primitive modulo $q$ and $\Re(s)=1/2$, they obtained\[L(f\times\chi,s)\ll_{f} |s|^2q^{5/11}d^2(q)\log q.\] V. Blomer and G. Harcos \cite{harcos} obtained the Burgess exponent $3/8$ for a more general holomorphic or a Maass cusp form.

For degree three $L$-functions, it was initially solved for several special cases in  \cite{blomer},\cite{munshi4}, \cite{munshi5}, \cite{munshi6} (see \cite{li} for $t$-aspect). However those only dealt with forms which are lifts of $GL(2)$ forms. In his series of papers  \cite{munshi2},\cite{munshi7},\cite{munshi8},\cite{munshi9},\cite{munshi10}, Munshi introduced a different approach to subconvexity through which he obtained subconvexity for more general degree three $L$-functions. Subconvexity bounds for Rankin--Selberg $L$-functions on $GL(2)\times GL(2)$ were known due to Sarnak \cite{sarnak}, Kowalski–- Michel-–Vanderkam \cite{kowalski}, and Lau–-Liu–-Ye \cite{liu}. The $t$-aspect subconvexity for genuine $GL(4)$ $L$-functions remains an important open problem.

Subconvexity bound for Rankin--Selberg $GL(3)\times GL(2)$ $L$-functions was first obtained by Li \cite{li}, where she considered the $L$-function of a fixed self-dual Hecke-Maass form for $SL(3,\mathbb{Z})$ twisted by a Hecke-Maass form for  $SL(2,\mathbb{Z})$, or by a holomorphic Hecke cusp form for $SL(2,\mathbb{Z})$, in the eigenvalue aspect, or respectively in the weight aspect, of the $GL(2)$ form. Blomer \cite{blomer} considered this problem in the level aspect where the twist is by special Hecke-Maass forms of prime square level.

Recently, using his separation of oscillation technique (as introduced in \cite{munshi7}), $t$-aspect subconvexity  for $GL(3)\times GL(2)$ $L$- functions was obtained by Munshi \cite{munshi1}, where the $GL(3)$ form is any Hecke-Maass cusp form for $SL(3, \mathbb{Z})$.  Following similar strategy,  we use the `conductor lowering' trick as a device for separation of oscillations in the circle method as introduced in \cite{munshi2}. The key feature here is that the $GL(2), GL(3)$ Voronoi summations together transform the Ramanujan sum arising in the delta method to an additive character with respect to the $GL(3)$ variable. This was the crucial observation in \cite{munshi1}. As such, we save more after applying Poisson summation after Cauchy-Schwarz inequality. After a final application of the Cauchy-Schwarz inequality followed by Poisson summation, we observe that there is not enough saving in the `zero frequency' and extra saving in the non-zero frequencies. To optimize this, we use the `Mass transfer' trick introduced in \cite{munshi3} which essentially decreases the contribution of the zero frequency at the cost of an increase in the length of non-zero frequencies. This is worth comparing to the trick used by James Maynard in \cite{maynard} (Section 3.4), where he is faced with the same problem of reducing the size of diagonal terms. He handles this by artificially introducing congruence conditions at the beginning, whereas we achieve it by increasing the size of the equation that we detect using the delta symbol (see \eqref{eq3} below). We proceed to prove the theorem for $f$ a Maass cusp form. Note that the same proof goes through for other cases as well with mild alterations (depending on their respective Voronoi summation formulas to be precise).\\
\section{The set up }
Let $\pi$ be a Hecke-Maass cusp form of type $(\nu_1,\nu_2)$ for $SL(3,\mathbb{Z})$, $f$ be a Maass cusp form of type $1/2+i\nu$ for $SL(2,\mathbb{Z})$ and $\chi$ be any non-trivial character $\bmod \,p$. The associated $L$-function $L(s,\pi\times f\times\chi)$ has arithmetic conductor $p^6$. Hence, from the approximate functional equation (see Th. 5.3 and Prop. 5.4 in \cite{iwaniec}, Theorem 3.9 in \cite{booker}) we get
\begin{equation}\label{FE}
L\left(\frac{1}{2},\pi\times f\times\chi\right)\ll\left|\sum_{m=1}^{\infty}\frac{\lambda_{\pi\times f\times\chi}(m)}{m^{1/2}}\mathcal{V}\left({\frac{m}{p^{3}}}\right)\right|,
\end{equation}
where $$\lambda_{\pi\times f\times\chi}(m)=\mathop{\sum\sum}_{nr^2=m}\lambda_{\pi}(r,n)\lambda_{f}(n)\chi(nr^2)$$
and the smooth function $\mathcal{V}$ satisfies
\begin{equation}\label{22}
x^j\mathcal{V}^{(j)}(x)=O_{A}(1+|x|)^{-A}.
\end{equation}
\begin{remark}[Notation]
In this paper the notation $\alpha\ll A$ will mean that for any $\epsilon>0$, there is a constant $c$ such that $|\alpha|\leq cAp^{\epsilon}$. The dependence of the constant on $\pi,f$ and $\epsilon$, when occurring, will be ignored. We will also use the phrase ``negligible error" by which we mean an error term $O_{B}(p^{-B})$ for an arbitrary $B>0$.
\end{remark}
We reduce our problem to estimating the following partial sums :
\begin{lemma}\label{•}For any $0\leq\theta \leq 3$, we have
\begin{equation}\label{dyd}
L\left(\frac{1}{2},\pi\times f\times\chi\right)\ll \sup_{r\leq p^{\theta}}\sup_{\frac{p^{3-\theta}}{r^2}\leq N\leq \frac{p^{3+\epsilon}}{r^2}}\frac{|S_r(N)|}{N^{1/2}}+p^{(3-\theta)/2},
\end{equation}
where
 \begin{equation*}
S_r(N)=\sum_{n=1}^{\infty}\lambda_{\pi}(r,n)\lambda(n)\chi(n)V\left(\frac{n}{N}\right)
\end{equation*} where $V$ is a smooth function supported in $[1,2]$ and satisfies $V^{(j)}(x)\ll_{j} 1$.

\end{lemma}	
\begin{proof}From \eqref{FE} and \eqref{22} we get
\begin{equation}\label{rr}
\begin{aligned}
L\left(\frac{1}{2},\pi\times f\times\chi\right)&\ll\left|\mathop{\sum\sum}_{n,r=1}^{\infty}\frac{\lambda_{\pi}(r,n)\lambda_{f}(n)\chi(nr^2)}{(nr^2)^{1/2}}\mathcal{V}\left(\frac{nr^2}{p^{3}}\right)\right|\\
&\ll\left|\mathop{\sum\sum}_{nr^2\leq p^{3+\epsilon}}\frac{\lambda_{\pi}(r,n)\lambda_{f}(n)\chi(nr^2)}{(nr^2)^{1/2}}\mathcal{V}\left(\frac{nr^2}{p^{3}}\right)\right|+p^{-2019}\\
&=\left|\sum_{r\leq p^{(3+\epsilon)/2}}\frac{\chi(r^2)}{r}\sum_{n\leq p^{3+\epsilon}/r^2}\frac{\lambda_{\pi}(r,n)\lambda_{f}(n)\chi(n)}{n^{1/2}}\mathcal{V}\left(\frac{nr^2}{p^{3}}\right)\right|+p^{-2019}.
\end{aligned} 
\end{equation}
We divide the range in the last summation as
\begin{equation*}
\sum_{r\leq p^{(3+\epsilon)/2}}\sum_{n\leq\frac{ p^{3+\epsilon}}{r^2}}=\sum_{r\leq p^{\theta}}\sum_{\frac{p^{3-\theta}}{r^2}\leq n\leq \frac{p^{3+\epsilon}}{r^2}}+\sum_{r\leq p^{\theta}}\sum_{n< \frac{p^{3-\theta}}{r^2}}+\sum_{r>p^{\theta}}\sum_{n\leq\frac{ p^{3+\epsilon}}{r^2}},
\end{equation*}where an optimal $\theta>0$ will be chosen later. Using the Ramanujan bound on average
\begin{equation}\label{rama}
\mathop{\sum\sum}_{n_1^2n_2\leq x}|\lambda_{\pi}(n_1,n_2)|^2\ll x,\,\,\,\,\quad\sum_{n\leq x}|\lambda_f(n)|^2\ll x 
\end{equation} one sees that the last two ranges contributes at most $p^{(3-\theta)/2}$ to \eqref{rr}. Hence we have
\begin{equation*}
L\left(\frac{1}{2},\pi\times f\times\chi\right)\ll\left|\sum_{r\leq p^{\theta}}\frac{\chi(r^2)}{r}\sum_{\frac{p^{3-\theta}}{r^2}\leq n\leq \frac{p^{3+\epsilon}}{r^2}}\frac{\lambda_{\pi}(r,n)\lambda_{f}(n)\chi(n)}{n^{1/2}}\mathcal{V}\left(\frac{nr^2}{p^{3}}\right)\right|+p^{(3-\theta)/2}.
\end{equation*} Using a smooth dyadic partition of unity $W$, we see that the second sum inside the absolute value above is at most
\begin{equation*}
 \sup_{\frac{p^{3-\theta}}{r^2}\leq N\leq \frac{p^{3+\epsilon}}{r^2}}\Bigg|\sum_{n=1}^{\infty}\frac{\lambda_{\pi}(r,n)\lambda_{f}(n)\chi(n)}{n^{1/2}}W\left(\frac{n}{N}\right)\mathcal{V}\left(\frac{nr^2}{p^{3}}\right)\Bigg|,
\end{equation*} which can we written as
 \begin{equation}\label{par}
\sup_{\frac{p^{3-\theta}}{r^2}\leq N\leq \frac{p^{3+\epsilon}}{r^2}}\frac{|S_r(N)|}{N^{1/2}},
\end{equation}where 
\begin{equation*}
S_r(N)=\sum_{n=1}^{\infty}\lambda_{\pi}(r,n)\lambda(n)\chi(n)V_{r,N}\left(\frac{n}{N}\right).
\end{equation*} where $V_{r,N}(x)=x^{-1/2}W(x)\mathcal{V}(Nr^2x/p^{3})$. Note that $V_{r,N}(x)$ is supported on $[1,2]$ and satisfies $V_{r,N}^{(j)}\ll_{j} 1$ (bounds independent of $r,N$). Henceforth we ignore the dependence on $r,N$ and assume $V_{r,N}$ is same function for all $r,N$ and call it $V(x)$ (abusing notation). The claim follows.
\end{proof}

\subsection{The delta method.} We now separate oscillations from $\lambda_{\pi}(r,n)$ and $\lambda (n)\chi(n)$ using a version of the delta method due to Duke, Friedlander and Iwaniec. More specifically, we will use the expansion $(20.157)$ given in Chapter 20 of \cite{iwaniec}. Let $\delta : \mathbb{Z}\to \{0,1\}$ be defined by
\[
\delta(n)=
\begin{cases}
1 &\text{if}\,\,n=0 \\
0 &\text{otherwise}.
\end{cases}
\]
Then for $n\in\mathbb{Z}\cap [-2M,2M]$, we have
\begin{equation}\label{deltasymbol}
\delta(n)=\frac{1}{Q}\sum_{q\leq Q}\frac{1}{q}\sum_{a\bmod q}e\left(\frac{na}{q}\right)\int_{\mathbb{R}}g(q,x)e\left(\frac{nx}{q	Q}\right) dx \,\,,
\end{equation} where $Q=2M^{1/2}$. The function $g$ satisfies the following property (see $(20.158)$ and $(20.159)$ of \cite{iwaniec}, Lemma 15 of \cite{huang}).
\begin{equation}\label{delta}
\begin{aligned}
&g(q,x)=1+O\left(\frac{Q}{q}\left(\frac{q}{Q}+|x|\right)^A\right),\,\,\,\, \,\,\,\, g(q,x)\ll |x|^{-A}\\
&x^j\frac{\partial^j}{\partial x^j} g(q,x)\ll \log Q\min \{Q/q,1/|x|\}
\end{aligned}
\end{equation}for any $A>1$. In particular, the second property implies that the effective range of the integral in \eqref{deltasymbol} is $[-M^{\epsilon},M^{\epsilon}]$. It also follows from first the property that if $\min \{Q/q,1/|x|\} > Q^{\epsilon}$, we can replace $g$ by $1$ up to a negligible error term. And if $\min \{Q/q,1/|x|\} \leq Q^{\epsilon}$, then the third property implies $x^j\frac{\partial^j}{\partial x^j} g(q,x)\ll Q^{\epsilon}$. In any case, we can view $g(q,x)$ as a `nice' weight function.
\vspace{5mm}
\subsection{The mass transfer} A direct application of the delta method fails to beat the convexity bound for the diagonal contribution. So, we need employ the trick of `Mass transfer' (amplification) as introduced by Munshi in \cite{munshi3}.

 From above discussion, it suffices study to the sums $S_r(N)$. Let $\mathscr{L}$ be the set of primes in $[L,2L]$. Recall the Hecke relation (see Theorem 6.4.11 of \cite{goldfeld})
\begin{equation*}
\lambda_{\pi}(1,l)\lambda_{\pi}(r,n)=\lambda_{\pi}(r,nl)+\lambda_{\pi}(rl,n/l)+\lambda_{\pi}(r/l,nl)\,,
\end{equation*}where the second term occurs only if $l|n$ and the third term only if $l|r$. The latter does not happen since the size of  $l\in\mathcal{L}$ and $r$ will be chosen such that
\begin{equation}\label{eq2.8}
r<L=p^{\eta}\,
\end{equation}
for some $0<\eta<1$. Using this we have
\begin{equation*}
S_r(N)=\frac{1}{\sum_{l\in\mathscr{L}}|\lambda_{\pi}(1,l)|^2}\sum_{l\in\mathscr{L}}\overline{\lambda_{\pi}(1,l)}\sum_{n=1}^{\infty}(\lambda_{\pi}(r,nl)+\lambda_{\pi}(rl,n/l))\lambda(n)\chi(n)V\left(\frac{n}{N}\right).
\end{equation*}
Using the fact that (see Lemma 5.1 of \cite{liu1})
\begin{equation*}
\sum_{l\in\mathscr{L}}|\lambda_{\pi}(1,l)|^2\gg L^{1-\epsilon},
\end{equation*}  we get
\begin{equation*}
S_r(N)\ll\frac{1}{L}\sum_{l\in\mathscr{L}}\left|\overline{\lambda_{\pi}(1,l)}\sum_{n=1}^{\infty}(\lambda_{\pi}(r,nl)+\lambda_{\pi}(rl,n/l))\lambda(n)\chi(n)V\left(\frac{n}{N}\right)\right|.
\end{equation*}
The contribution coming from the second term is
\begin{equation}\label{2.9}
\frac{1}{L}\sum_{l\in \mathcal{L}}|\overline{\lambda_{\pi}(1,l)}|\sum_{n'\sim N/l}|\lambda_{\pi}(rl,n')|\left(|\lambda(n')\lambda(l)|+|\lambda(n'/l)|\right).
\end{equation}Note that from  Hecke relations one has
\begin{equation}\label{2.10}
\begin{aligned}
\sum_{b\ll X}|\lambda_{\pi}(a,b)|^2=&\sum_{b\ll X}\left|\sum_{d|(a,b)}\mu(d)\lambda_{\pi}(a/d,1)\lambda_{\pi}(1,b/d)\right|^2\\
\ll &\sum_{ d|a}|\lambda_{\pi}(a/d,1)|^2\sum_{b'\ll X/d}|\lambda_{\pi}(1,b')|^2\\
\ll& a^{2\theta_0+\epsilon}X,
\end{aligned}
\end{equation}where $\theta_0=5/14$ (see \cite{kimsar}). By Cauchy-Schwarz, \eqref{2.10} and Ramanujan bound on average for the $GL(2)$ coefficient, we see that \eqref{2.9} is bounded by
\begin{equation}\label{err1}
\frac{(rL)^{\theta_0}N}{L^2}\sum_{l\in\mathcal{L}}|\lambda_{\pi}(1,l)|(|\lambda (l)|+1)\ll \frac{(rL)^{\theta_0}N}{L}\ll \frac{r^{1/2}N}{L^{1/2}}.
\end{equation}Hence we get
\begin{equation}\label{eq212}
S_r(N)\ll\frac{1}{L}\left|\sum_{l\in\mathscr{L}}\overline{\lambda_{\pi}(1,l)}\sum_{n=1}^{\infty}\lambda_{\pi}(r,nl)\lambda(n)\chi(n)V\left(\frac{n}{N}\right)\right|+O\left(\frac{r^{1/2}N}{L^{1/2}}\right).
\end{equation}Write
\begin{equation}
\tilde{S}_r(N)=\frac{1}{L}\sum_{l\in\mathscr{L}}\overline{\lambda_{\pi}(1,l)}\sum_{n=1}^{\infty}\lambda_{\pi}(r,nl)\lambda(n)\chi(n)V\left(\frac{n}{N}\right)
\end{equation}The rest of the paper is devoted to estimation of the sum $\tilde{S}_r(N)$. Plugging in the $\delta$ function, we get
\begin{equation}\label{eq3}
\begin{aligned}
\tilde{S}_r(N)=\frac{1}{L}\sum_{l\in\mathscr{L}}\overline{\lambda_{\pi}(1,l)}\sum_{m}\sum_{\substack{n\\p|(m-nl)}}\lambda_{\pi}(r,m)V\left(\frac{m}{lN}\right)\delta\left(\frac{m-nl}{p}\right)&\lambda(n)\chi(n)U\left(\frac{n}{N}\right)
\end{aligned}
\end{equation}where $U$ is any smooth function supported in $[1/2,5/2]$ and equals 1 on $[1,2]$. To complete the separation, we use the delta expansion in \eqref{deltasymbol} with 
\begin{equation}
Q=(NL/p)^{1/2}
\end{equation}
and detect $p|(m-nl)$ using additive characters to see that  
\begin{equation}\label{eq5}
\begin{aligned}
&\tilde{S}_r(N)\\
&=\frac{1}{pQL}\int_{\mathbb{R}}\sum_{l\in\mathcal{L}}\overline{\lambda_{\pi}(1,l)}\sum_{u=0}^{p-1}\sum_{1\leq q\leq Q}\frac{g(q,x)}{q}\sideset{}{^*}\sum_{a (q)}\left(\sum_m\lambda_{\pi}(r,m)e\left(\frac{ma}{pq}+\frac{mx}{pqQ}+\frac{mu}{p}\right)V\left(\frac{m}{lN}\right)\right)\\
&\quad\quad\quad\quad\quad\quad\quad\quad\quad\quad\quad\quad\quad\quad\quad\times \left(\sum_{n}\lambda(n)\chi(n)e\left(-\frac{nla}{pq}-\frac{nlx}{pqQ}-\frac{lnu}{p}\right)U\left(\frac{n}{N}\right)\right)dx\,.
\end{aligned}
\end{equation}
We first work out the details assuming
\begin{equation}\label{210}
(pl,q)=1  \,\,\,\text{and}\,\,\,\,u\neq 0.
\end{equation}The remaining cases provide us with smaller bounds and are dealt with later towards the end of the paper. In the case \eqref{210}, we replace $a\to pa $ and re-write \eqref{eq5}  as 
\begin{equation}\label{217}
\begin{aligned}
&\tilde{S}_r(N)\\
&=\frac{1}{pQL}\int_{\mathbb{R}}\sum_{l\in\mathcal{L}}\overline{\lambda_{\pi}(1,l)}\sum_{u=0}^{p-1}\sum_{\substack{1\leq q\leq Q\\(pl,q)=1}}\frac{g(q,x)}{q}\sideset{}{^*}\sum_{a (q)}\left(\sum_m\lambda_{\pi}(r,m)e\left(\frac{ma}{q}+\frac{mx}{pqQ}+\frac{mu}{p}\right)V\left(\frac{m}{lN}\right)\right)\\
&\quad\quad\quad\quad\quad\quad\quad\quad\quad\quad\quad\quad\quad\quad\quad\times \left(\sum_{n}\lambda(n)\chi(n)e\left(-\frac{nla}{q}-\frac{nlx}{pqQ}-\frac{lnu}{p}\right)U\left(\frac{n}{N}\right)\right)dx\,.
\end{aligned}
\end{equation}
\vspace{5mm}
\section{Sketch of the proof}
For simplicity we assume the generic case $N\sim p^3, q\sim Q=p\sqrt{L}$ and $r=1$. After applying the circle method and the conductor lowering trick by Munshi , our main object of study becomes of the form
\begin{equation}
\begin{aligned}
\frac{1}{pqQL}\sum_{u\bmod p}\sum_{q\sim Q}\sum_{a\bmod q}\sum_{l\sim L}\overline{\lambda_{\pi}(1,l)}&\sum_{m\sim NL}\lambda_{\pi}(1,m)e\left(\frac{m(ap+uq)}{pq}\right)\\
&\sum_{n\sim N}\lambda(n)\chi(n)e\left(\frac{-nl(ap+uq)}{pq}\right).
\end{aligned}
\end{equation} Trivially estimating at this stage gives $S(N)\ll N^2L$. So we want to save $NL$ plus a little more in the above sum.  We apply Voronoi summation formulae to both $n$ and $m$ sums. In the $GL(2)$ Voronoi formula we save $N/pq\sim p/\sqrt{L}$ with the dual length $\sim pL$ and in the $GL(3)$ Voronoi formula we save $NL/(pq)^{3/2}\sim L^{1/4}$ with the dual length $\sim p^3\sqrt{L}$. We also save  $\sqrt{Q}$ in the $a$ sum and $\sqrt{p}$ in the $u$ sum. Hence in total we have saved $p^2$ and at this stage trivial estimation gives
\begin{equation}
S(N)\ll \frac{(pQ)^{5/2}}{\sqrt{Q}\sqrt{p}}=p^4L.
\end{equation} So we need to save $pL$ and a little more in the tranformed sum 
\begin{equation}
\sum_{q\sim Q}\sum_{m\sim p^3\sqrt{L}}\lambda_{\pi}(m,1)\sum_{n\sim pL}\lambda(n)\mathcal{C} \,\mathcal{J},
\end{equation} where $\mathcal{J}$ is a integral transform which does not oscillate and 
\begin{equation}\label{add}
\begin{aligned}
\mathcal{C}&\approx\sum_{b (p)}\bar{\chi}(b)\sum_{u (p)}\sum_{a(q)}S(\overline{ap+uq},m;pq)e\left(\frac{n \overline{l(ap+(u-b\bar{l})q)}}{pq}\right)\\
&\leadsto q\sum_{\alpha (p)} \mathcal{C}_1(q,\alpha,-n\bar{l})e\left(\frac{(\overline{-n\bar{l}+q\alpha})m}{pq}\right),
\end{aligned}
\end{equation}where 
\begin{equation}
\mathcal{C}_1(q,\alpha,n)=\sum_{b (p)}\bar{\chi} (b)\sum_{u (p)}e\left(\frac{\bar{q}^2((n+q\alpha)\bar{u}+n(\overline{u-b\bar{l}}))}{p}\right).
\end{equation}
Note that we have already assumed square root cancellation in all the variables in $\mathcal{C}$.

We next apply Cauchy-Schwarz inequality to arrive at
\begin{equation}\label{36}
p^{3/2}L^{1/4}\left(\sum_{m\sim p^3\sqrt{L}}\left|\sum_{\alpha (p)}\sum_{l\sim L}\sum_{q\sim Q}\sum_{n\sim pL}\lambda(n)\mathcal{C}_1(q,\alpha,-n\bar{l}) e\left(\frac{(\overline{-n\bar{l}+q\alpha})m}{pq}\right)\mathcal{J}\right|^2\right)^{1/2}.
\end{equation}Opening the absolute value square, we apply Poisson summation formula on the $m$ sum. In the diagonal (zero frequency) we save $(LQpL)^{1/2}\sim pL^{5/4}$ and the contribution of the diagonal becomes $p^4L\times 1/pl^{5/4}=p^3/L^{1/4}$. Recall that the square-root saving in $\alpha\bmod p$ have been already considered. By linearity, the off-diagonal produces a factor $pqq'$ together with a congruence relation of the form
\begin{equation}
q'(\overline{-n\bar{l}+q\alpha})-q(\overline{-n'\bar{l'}+q'\alpha'})= n_2 (\bmod pqq') .
\end{equation}
We save $q$ and $q'$ from $n$ and $n'$ respectively. Given $l,l'q,q',n$ and $n'$, $\alpha'$ is then determined by $\alpha$, say $f(\alpha)$. Finally, by viewing as a hypergeometric function (as defined by Katz), square-root cancellation in the sum
\begin{equation}
\sum_{\alpha (p)}\mathcal{C}_1(q,\alpha,-n\bar{l})\mathcal{C}_1(q',f(\alpha),-n'\bar{l'}).
\end{equation}is proved by Will Sawin in the appendix. Before Cauchy-Schwarz, the $\alpha$ and $m$ sum were contributing $p^3\sqrt{L}\times \sqrt{p}$. After Cauchy-Schwarz and Poisson summation, they are contributing $p^{3/2}L^{1/4}p^{1/4}p^{1/2}$. Hence in the off-diagonal, we effectively save $p^{3/2}L^{1/4}/p^{1/4}$ and its contribution to $S(N)$ is bounded by $ p^4L\times 1/p^{5/4}L^{1/4}=p^{(3-1/4)}L^{3/4}$. With the optimal choice $L=p^{1/4}$, we get the claimed bound. 
\section{Voronoi summation formulae}
We now proceed to transform the $GL(3)$ and $GL(2)$ sums in \eqref{eq5} using their respective Voronoi formulas.
\subsection{The $GL(3)$ sum.}Let $\{\alpha_i : i=1,2,3\}$ be the Langlands parameters for $\pi$. Let $g$ be a compactly supported smooth function on $(0,\infty)$. For $l=0,1$, we define
\begin{equation*}
\gamma_l(s)=\frac{{\pi}^{-3s-\frac{3}{2}}}{2}\prod_{i=1}^{3}\frac{\Gamma\left(\frac{1+s+\alpha_i+l}{2}\right)}{\Gamma\left(\frac{-s-\alpha_i+l}{2}\right)}.
\end{equation*}Set $\gamma_{\pm}(s)=\gamma_{0}\pm i\gamma_{1}(s)$ and let
\begin{equation}\label{41}
G_{\pm}(y)=\frac{1}{2\pi i}\int_{(\sigma)}y^{-s}\gamma_{\pm}(s)\tilde{g}(-s)ds=G_0(y)\pm iG_{1}(y)\,,
\end{equation}where $\sigma> -1+\max\{-\Re(\alpha_1),-\Re(\alpha_2),-\Re(\alpha_3)\}$ and $\tilde{g}$ is the Mellin transform of $g$.
\begin{lemma}[\textbf{$GL(3)$ Voronoi summation formula}]Let $c,d,r\geq 1$ be integers such that $(c,d)=1$. Then, with the above notation we have,
\begin{equation}\label{vor}
\begin{aligned}
&\sum_{n=1}^{\infty}\lambda_{\pi}(r,n)e\left(\frac{dn}{c}\right)g(n) \\
&=c\sum_{\pm}\sum_{n_1|cr}\sum_{n_2=1}^{\infty}\frac{\lambda_{\pi }(n_1,n_2)}{n_1n_2}S(r\bar{d},\pm n_2; cr/n_1)G_{\pm}\left(\frac{n_1^2n_2}{c^3r}\right).
\end{aligned}
\end{equation} 
\end{lemma}
\begin{proof} See \cite{voronoi}
\end{proof}
We have the following oscillatory behaviour of transforms $G_{\pm}$  due to X. Li \cite{li}.
\begin{lemma}\label{•}
Suppose $g(y)$ is a smooth function  compactly supported on $[Y,2Y]$. Then for any fixed integer $K\geq 1$ and $yY\gg1$,
\begin{equation*}
G_0(y)=\frac{{\pi}^{\frac{3}{2}}y}{2}\int_{0}^{\infty}g(z)\sum_{j=1}^{K}\frac{c_j\cos (6\pi y^{\frac{1}{3}}z^{\frac{1}{3}})+d_j\sin (6\pi y^{\frac{1}{3}}z^{\frac{1}{3}})}{{({\pi}^3yz)}^{\frac{j}{3}}}dz+O\left((yY)^{\frac{-K+2}{3}}\right)\,,
\end{equation*}where $c_j$ and $d_j$ are constants depending on $\alpha_i$'s.
\end{lemma}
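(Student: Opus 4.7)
The plan is to reduce the lemma to a stationary-phase analysis of a Mellin--Barnes integral. Using $\tilde g(-s)=\int_0^\infty g(y)y^{-s-1}\,dy$ and interchanging the order of integration in the definition of $G_0$, one obtains
\begin{equation*}
G_0(x)=\int_0^\infty g(y)\,H(xy)\,\frac{dy}{y},\qquad H(z)=\frac{1}{2\pi i}\int_{(\sigma)}z^{-s}\gamma_0(s)\,ds,
\end{equation*}
so the problem reduces to expanding $H(z)$ asymptotically and then averaging against $g$ supported on $[X,2X]$.

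After shifting the contour to $\Re(s)=0$ and setting $s=it$, apply Stirling's formula to each ratio $\Gamma((1+s+\alpha_i)/2)/\Gamma((-s-\alpha_i)/2)$. Using the trace-free identity $\alpha_1+\alpha_2+\alpha_3=0$, the three factors combine into a single asymptotic
\begin{equation*}
\gamma_0(it)\sim c_0\,\pi^{-3it-3/2}\,(t/2)^{3it+3/2}\,e^{-3it}\Bigl(1+\sum_{j\geq 1}c'_j\,t^{-j}\Bigr),
\end{equation*}
valid uniformly for $|t|\gg 1$, with a conjugate version for $t<0$. Substituting this into $H(z)$ yields an oscillatory integral with phase $\phi(t)=-t\log(\pi^3 z)+3t\log(t/2)-3t$, whose saddle points lie at $t=\pm 2\pi z^{1/3}$, with $\phi(\pm 2\pi z^{1/3})=\mp 6\pi z^{1/3}$.

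The stationary-phase method at the two saddles produces complex-conjugate contributions proportional to $e^{\pm 6\pi i z^{1/3}}\cdot z^{2/3}$, which combine into $2\cos(6\pi z^{1/3})$; pushing the Stirling and stationary-phase expansions to $K$ terms yields the full series in $(\pi^3 z)^{-j/3}$ with coefficients $c_j,d_j$ depending on $\alpha_1,\alpha_2,\alpha_3$. Repeated integration by parts handles the non-stationary part of the integral as well as the bounded range $|t|\ll 1$. Substituting $z=xy$ and integrating against $g$ then produces the claimed expansion with error $O((xX)^{(2-K)/3})$.

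The main technical obstacle will be the precise bookkeeping of Stirling constants required to land exactly on the phase $6\pi(xy)^{1/3}$ with the leading amplitude $\pi^{3/2}x/2$, together with a uniform bound on the tail of the stationary-phase expansion. The latter is handled by isolating a window of size $\asymp z^{1/3}$ around each saddle, Taylor-expanding the phase and amplitude inside, and using $\phi''(t)=3/t$ to justify repeated integration by parts in the complementary region.
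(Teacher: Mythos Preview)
The paper does not give its own proof of this lemma: it is quoted verbatim from X.~Li \cite{li} and simply cited. So there is no in-paper argument to compare against. Your sketch is the standard route and is essentially what Li does: write $G_0(x)=\int_0^\infty g(y)H(xy)\,dy/y$ with $H(z)=\frac{1}{2\pi i}\int_{(\sigma)}z^{-s}\gamma_0(s)\,ds$, apply Stirling to the three Gamma ratios using $\alpha_1+\alpha_2+\alpha_3=0$, locate the stationary points of the resulting phase at $t=\pm 2\pi z^{1/3}$, and read off the oscillation $e^{\pm 6\pi i z^{1/3}}$ together with the amplitude $z^{2/3}$ and its lower-order corrections.

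Two small points worth flagging. First, the hypothesis printed as $xX\ll 1$ in the statement is a typo for $xX\gg 1$; your stationary-phase argument (and the shape of the error $(xX)^{(2-K)/3}$) only makes sense in the large-$xX$ regime, so you should state this explicitly. Second, your write-up is a plan rather than a proof: the phrases ``pushing the expansions to $K$ terms'' and ``repeated integration by parts handles the non-stationary part'' hide the only genuinely delicate steps, namely a uniform bound on the Stirling remainder in the window $|t-t_0|\ll t_0^{1/2+\epsilon}$ and a quantitative decay estimate outside it. If you intend to supply an actual proof rather than cite Li, those two estimates need to be written out; otherwise, citing \cite{li} as the paper does is sufficient.
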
The asymptotic behaviour $G_1(y)$ is similar, with changes only in the constants $c_j$ and $d_j$. In $G_0(y)$, note that the oscillatory part is same for all $j$, whereas the magnitude of weight functions $g(z)/(yz)^{j/3}$ decreases as $j$ increases (this uses the assumption $yY\gg 1$). Hence the main term in $G_0(y)$ is well represented by the $j=1$ contribution, and we will be working with this term only.   
\\
\\
\noindent
Note that $(pq, ap+uq)=1$ by the assumption in \eqref{210}. In our context we have $c=pq, d=ap+uq $ and $g(n)=e(nx/pqQ)V(n/lN)$.  By the assumption in Lemma \ref{vor}, the above asymptotic expansion can be used in the range
\begin{equation}
\frac{NLn_1^2n_2}{(pq)^3r}\gg p^{\epsilon}
\end{equation}i.e,
\begin{equation}\label{range}
n_1^2n_2\gg \frac{p^{\epsilon}(pq)^3r}{NL},
\end{equation}
where are able to replace $m$ sum in \eqref{eq5} essentially by 
\begin{equation}\label{msum}
\begin{aligned}
\frac{(Nl)^{2/3}}{pqr^{2/3}}\sum_{\pm}\sum_{n_1|pqr}n_1^{1/3}\sum_{n_2=1}^{\infty}\frac{\lambda_{\pi}(n_1,n_2)}{n_2^{1/3}}&S\left(r(\overline{ap+uq}),\pm n_2;pqr/n_1\right)\\ &\times
\int_{\mathbb{R}}V(z)e\left(\frac{Nlxz}{pqQ}\pm \frac{3(Nln_1^2n_2z)^{1/3}}{pqr^{1/3}}\right)dz \,.
\end{aligned}
\end{equation}By repeated integration by parts, we see that the integral is negligibly small unless $n_1^2n_2\ll M_0$, where
\begin{equation}
M_0=p^{\epsilon}rN^2L^2/Q^3=rp^{3/2+\epsilon}N^{1/2}L^{1/2}.
\end{equation}
Note that if $q\leq p^{-\epsilon}Q$, then
\begin{equation}
\frac{Nl}{pqQ}\gg p^{\epsilon},
\end{equation}in which case the integral above is seen to be negligibly small unless \begin{equation}
n_1^2n_2\gg p^{-2\epsilon}M_0
\end{equation}On other hand if $q\geq p^{-\epsilon}Q$, then \eqref{range} gives
\begin{equation}
n_1^2n_2\gg \frac{p^{-2\epsilon}(pQ)^3r}{NL}=p^{-3\epsilon}M_0
\end{equation}
Hence we use expression \eqref{msum} in the range
\begin{equation}
 p^{-3\epsilon}M_0\ll n_1^2n_2\ll M_0= rp^{3/2+\epsilon}N^{1/2}L^{1/2}=\frac{p^{\epsilon}(pQ)^3r}{NL}.
\end{equation}
\begin{remark}
In the complementary range $xX\ll 1$, i.e.
\begin{equation}\label{comp}
n_1^2n_2\ll \frac{p^{\epsilon}(pq)^3r}{NL} :=\tilde{M}_0,
\end{equation}
 by shifting the contour in \eqref{41} to $\sigma=-1/3$ (say) without crossing the poles of the gamma factors we get
\begin{equation}
x^jG_{\pm}^{(j)}(x)\ll_j (xX)^{1/3}\ll_j 1.
\end{equation}So, in this case, we put the integral transform $G_{\pm}\left(\frac{n_1^2n_2}{c^3r}\right)$ inside the weight functions and proceed same as below. The only difference is now we will be working with a smaller range for the $n_2$ sum given by \eqref{comp}, and a smaller range for the Poisson variable at a later stage (see Remark \ref{Remark 4} below).
\end{remark}
\vspace{5mm}
\subsection{The $GL(2)$ sum} 
\begin{lemma}[\textbf{GL(2) Voronoi summation formula.}]Let $f$ be a Maass cusp form with Laplacian eigenvalue $1/4+\nu^2, \nu \geq 0$, and $\varepsilon_f$ the eigenvalue of the involution operator. Let $h$ be a compactly supported smooth function on the interval $(0,\infty)$. Let $a>0$ an integer and $c\in\mathbb{Z}$ be such that $(a,c)=1$. Then we have

\begin{equation}\label{vor2}
\sum_{n=1}^{\infty}\lambda_{f}(n)e\left(\frac{an}{c}\right)h(n)=\frac{1}{c}\sum_{\pm}\sum_{n=1}^{\infty}\lambda_{f}(n)e\left(\frac{\pm\overline{a}n}{c}\right)H^{\pm}\left(\frac{n}{c^2}\right)\,,
\end{equation}
where for $\nu>0$,
\begin{equation*}
\begin{aligned}
& H^{-}(\alpha)=\frac{-\pi}{\sin (\pi i\nu)}\int_{0}^{\infty}h(y)\{J_{2i\nu}-J_{-2i\nu }\}(4\pi\sqrt{y\alpha})dy \,,\\
& H^{+}(\alpha)=4\varepsilon_f\cosh (\pi\nu)\int_{0}^{\infty}h(y)K_{2i\nu}(4\pi\sqrt{y\alpha})dy\,,
\end{aligned}
\end{equation*}and for $\nu=0$,
\begin{equation*}
H^{-}(\alpha)=-2\pi\int_{0}^{\infty}h(y)Y_0(4\pi\sqrt{y\alpha})dy,\,\,\,\text{and}\,\,\, H^{+}(\alpha)=4\varepsilon_f\int_{0}^{\infty}h(y)K_{0}(4\pi\sqrt{y\alpha})dy\,.
\end{equation*}
\end{lemma}
\begin{proof}
See appendix A.4 of \cite{kowalski}.
\end{proof}

As with the $GL(3)$ case, we need the following asymptotics for the Bessel functions to extract the oscillation (see \cite{watson}, p. 206).
\begin{lemma}For $y>0$, the Bessel functions $J_{2i\nu}(y)$ and $K_{2i\nu}(y)$, ($\nu\in \mathbb{R}$) satisfy the following oscillatory behaviour 
\begin{equation}\label{bes} 
J_{2i\nu}(y)=e^{iy}P_{ 2i\nu}(y)+e^{-iy}Q_{ 2i\nu}(y)\,\,\,\hbox{and}\,\,\,\, \left|y^kK_{2i\nu}^{(k)}(y)\right|\ll_{k,\nu}\frac{e^{-y}}{\sqrt{y}}\,,
\end{equation}where the function $P_{ 2i\nu}(y)$ (and similarly $Q_{ 2i\nu}(y)$) satisfies 
\begin{equation}\label{48}
y^jP_{2i\nu}^{(j)}(y)\ll_{j,\nu}\frac{1}{\sqrt{y}}.
\end{equation}
\end{lemma}
Now, the $n$ sum in \eqref{eq5} equals
\begin{equation}\label{nsums}
\frac{1}{g_{\bar{\chi}}}\sum_{b (p)}\bar{\chi}(b)\sum_{n}\lambda(n)e\left(\frac{-nl(ap+(u-b\bar{l})q)}{pq}\right)e\left(\frac{-nlx}{pqQ}\right)U\left(\frac{n}{N}\right).
\end{equation}
Recall that $(pl,q)=1$ by assumption \eqref{210} and $(p,l)=1$ since $l$ is prime and $l\sim L<p$. Note that if $b=ul\bmod p$, then the conductor of $n$ sum reduces to $(NL)^2/(pQ)^2\ll NL/p$, whereas the initial length is of size $N$. Hence the dual length is $\ll L/p\ll p^{-\delta}$, for some $\delta>0$ to be chosen later. So, the dual sum becomes negligible in the case $b=ul\bmod p$. For $b\neq ul$, applying the $GL(2)$ Voronoi formula to \eqref{nsums} with $c=pq, a=-l(ap+(u-b\bar{l})q)$ and $h(n)=e(-nlx/pqQ)U(n/N)$, we arrive at
\begin{equation}
\begin{aligned}
\frac{1}{pqg_{\bar{\chi}}}\sum_{\substack{b (p)\\b\neq ul}}\bar{\chi}(b)\sum_{\pm}&\sum_{n=1}^{\infty}\lambda(n) e\left(\mp\frac{n\overline{l(ap+(u-b\bar{l})q)}}{pq}\right)H^{\pm}\left(\frac{n}{p^2q^2}\right).
\end{aligned}
\end{equation}We proceed with the calculation for the first part involving $H^{-}$ (see Remark \ref{re} below). Using \eqref{bes} we see that the  part with $H^{-}$ is essentially a sum of four sums of the type
\begin{equation}\label{eq8}
\begin{aligned}
\frac{N^{3/4}}{(pq)^{1/2}g_{\bar{\chi}}}\sum_{\substack{b (p)\\b\neq u}}\bar{\chi}(b)&\sum_{n}\frac{\lambda(n)}{n^{1/4}} e\left(\frac{n\overline{l(ap+(u-b\bar{l})q)}}{pq}\right)\\ 
&\times \int_{\mathbb{R}}\left(\frac{\sqrt{nN}}{pq}\right)^{1/2} U_{\pm2i\nu}\left(\frac{4\pi\sqrt{nNy}}{pq}\right)U(y)e\left(-\frac{lNxy}{pqQ}\pm\frac{2\sqrt{nNy}}{pq}\right)dy.
\end{aligned}
\end{equation}
By \eqref{48}, the weight function $(\sqrt{nN}/(pq))^{1/2}U_{\pm2i\nu}\left(\frac{4\pi\sqrt{nNy}}{pq}\right)U(y)$ has bounded derivatives (bounds not depending on $n,N,p,q$). Like earlier, we assume that this function is the same for all $n,N,p,q$ and call it $U(y)$.\\
\\
\noindent
 By repeated integration by parts, it is clear that the integral is negligibly small unless $n\ll N_0$, where
\begin{equation}
 N_0=p^{\epsilon}\frac{NL^2}{Q^2}=p^{1+\epsilon}L.
\end{equation}
\begin{remark}\label{re}
For the estimation of the part involving $H^+$, note that by the second property of \eqref{bes}, the function
\begin{equation}\label{4.20}
(\sqrt{nN}/(pq))^{1/2}K_{ 2i\nu}\left(\frac{4\pi\sqrt{nNy}}{pq}\right)
\end{equation}
has bounded derivatives. Hence, in this case we can put \eqref{4.20} inside the weight functions and proceed same as below. The only difference is the potentially smaller range for the dual sum given by
\begin{equation}
\frac{\sqrt{nN}}{pq}\leq p^{\epsilon}\,\,\implies n\ll p^{\epsilon}\frac{p^2q^2}{N}\leq p^{1+\epsilon}L=N_0,
\end{equation}which again follows from the second property of \eqref{bes}.
\end{remark}
\vspace{1mm}
\section{Cauchy and Poisson}
\subsection{Cauchy inequality}
Rearranging \eqref{eq8} we get,
\begin{equation}\label{nsum}
\frac{N^{3/4}}{(pq)^{1/2}g_{\bar{\chi}}}\sum_{n\ll N_0}\frac{\lambda(n)}{n^{1/4}}C_1(n\bar{l},a,q,u)J(n,q,l)\,,
\end{equation}where
\begin{equation}
C_1(n,a,q,u)=\sum_{\substack{b (p)\\b\neq ul}}\bar{\chi}(b)e\left(\frac{n\overline{(ap+(u-b\bar{l})q)}}{pq}\right)\,
\end{equation}and $J(n,q,l)$ is the integral in \eqref{eq8}.\\
Combining \eqref{eq5}, \eqref{msum} and \eqref{nsum}, we arrive at
\begin{equation}\label{comb}
\begin{aligned}
\tilde{S}_r(N)=\frac{N^{\frac{3}{4}+\frac{2}{3}}}{g_{\bar{\chi}}p^{\frac{5}{2}}r^{\frac{2}{3}}QL}\sum_{l\in\mathscr{L}}\overline{\lambda_{\pi}(1,l)}l^{\frac{2}{3}}&\sum_{\substack{1\leq q\leq Q\\ (q,pl)=1}}\frac{1}{q^{5/2}}\sum_{n_1|pqr}n_1^{1/3}\sum_{p^{-3\epsilon}\frac{M_0}{n_1^2}\ll n_2\ll\frac{M_0}{n_1^2}}\frac{\lambda_{\pi}(n_1,n_2)}{n_2^{1/3}}\\
&\sum_{n\ll N_0}\frac{\lambda(n)}{n^{1/4}}C_2(\dots)I(n_1^2n_2,n,q),
\end{aligned}
\end{equation}where
\begin{equation}
\begin{aligned}
C_2(\dots)&=\sum_{u=1}^{p-1}\sideset{}{^*}\sum_{a (q)}S\left(r(\overline{ap+uq}),n_2,pqr/n_1\right)C_1(n\bar{l},a,q,u),\\
&=\sideset{}{^*}\sum_{\alpha (pqr/n_1)}f(\alpha,n\bar{l},q)\tilde{S}(\alpha,n\bar{l},q)e\left(\frac{\bar{\alpha}n_2n_1}{pqr}\right)\,,
\end{aligned}
\end{equation}where
\begin{equation*}
\begin{aligned}
&\tilde{S}(\alpha,n,q)=\sideset{}{^*}\sum_{u (p)}\sum_{\substack{b (p)\\ b\neq ul}}\bar{\chi}(b)e\left(\frac{\bar{q}^2(n_1\alpha\bar{u}+n\overline{(u-b\bar{l})})}{p}\right),\\
&f(\alpha,n,q)=\sum_{\substack{d|q\\n_1\alpha\equiv-n\bmod d}} d\mu\left(q/d\right)\,,
\end{aligned}
\end{equation*}and 
\begin{equation}\label{iinte}
\mathcal{I}(m,n,q)=\int\int\int g(q,x)V(z)U(y)e\left(\frac{lNx(z-y)}{pqQ}\pm\frac{2\sqrt{nNy}}{pq}\pm\frac{3(Nlmz)^{1/3}}{pqr^{1/3}}\right)dy\,dz\,dx.
\end{equation}
Note that the inverse $\overline{ap+uq}$ is taken modulo $pq$.
Splitting $q$ in dyadic blocks $q\sim C$ with 
\begin{equation}\label{cop}
q=q_1q_2, \,\,\,\,\,q_1|(n_1r)^{\infty}, (q_2,n_1r)=1,
\end{equation}
the $C$ block in \eqref{comb} is 
\begin{equation}\label{5.6}
\begin{aligned}
\ll \frac{N^{17/12}}{r^{2/3}p^3QC^{5/2}L}\sum_{n_1\ll Cpr}n_1^{1/3}&\sum_{\substack{\frac{n_1}{(n_1,pr)}|q_1|(n_1r)^{\infty}}}\sum_{p^{-3\epsilon}M_0/n_1^2\ll n_2\ll M_0/n_1^2}\frac{|\lambda_{\pi}(n_1,n_2)|}{n_2^{1/3}}\\
&\times \Bigg|\sum_{l\in\mathscr{L}}\overline{\lambda_{\pi}(1,l)}l^{\frac{2}{3}}\sum_{\substack{q_2\sim C/q_1\\ (q, pl)=1\\(q_2,rn_1)=1}}\sum_{n\ll N_0}\frac{\lambda(n)}{n^{1/4}}C_2(\dots)\,\,\mathcal{I}(n_1^2n_2,n,q )\Bigg|.
\end{aligned}
\end{equation} Applying the Cauchy-Schwarz inequality yields that \eqref{5.6} is at most
\begin{equation}\label{eq14}
\ll\frac{N^{17/12}}{r^{2/3}p^3QC^{5/2}L}\sup_{N_1\ll N_0}\sum_{n_1\ll Cpr}n_1^{1/3}\Theta^{1/2}\sum_{\frac{n_1}{(n_1,pr)}|q_1|(pn_1)^{\infty}}\Omega^{1/2}\,,
\end{equation}where
\begin{equation}
\Theta=\sum_{n_2\ll M_0/n_1^2}\frac{|\lambda_{\pi}(n_1,n_2)|^2}{n_2^{2/3}},
\end{equation}and
\begin{equation}\label{59}
\Omega=\sum_{p^{-3\epsilon}M_0/n_1^2\ll n_2\ll M_0/n_1^2}\Bigg|\sum_{l\in\mathscr{L}}\overline{\lambda_{\pi}(1,l)}l^{\frac{2}{3}}\sum_{\substack{q_2\sim C/q_1\\(q,pl)=1\\(q_2,rn_1)=1}}\sum_{n\sim N_1}\frac{\lambda(n)}{n^{1/4}}C_2(\dots)\,\,\mathcal{I}(n_1^2n_2,n,q)\Bigg|^2.
\end{equation}

\subsection{Poisson summation}
Smoothing out the $n_2$ sum in \eqref{59} with an appropriate bump function $W$, and opening the absolute value square, we arrive at
\begin{equation}
\begin{aligned}
\Omega &=\sum_{l,l'\sim L}\overline{\lambda_{\pi}(1,l)}\lambda_{\pi}(1,l')(ll')^{\frac{2}{3}}\sum_{n,n'\sim N_1}\frac{\lambda(n)\overline{\lambda(n')}}{(nn')^{1/4}}\sum_{\substack{q_2,q'_2\sim C/q_1\\(q,pl)=(q',pl')=1\\(q_2,rn_1)=(q_2',rn_1)=1)}}\\
&\qquad\qquad\qquad\times\sideset{}{^*}\sum_{\alpha (pqr/n_1)}\sideset{}{^*}\sum_{\alpha' (pq'r/n_1)}f(\alpha,n\bar{l},q)\tilde{S}(\alpha,n\bar{l},q)\bar{f}(\alpha ',n'\bar{l'},q')\bar{\tilde{S}}(\alpha ',n'\bar{l'},q')\\
&\qquad\qquad\qquad\times\sum_{n_2\in\mathbb{Z}}W\left(\frac{p^{3\epsilon}n_1^2n_2}{M_0}\right)e\left(n_2\left(\frac{n_1\overline{\alpha}}{pqr}-\frac{n_1\overline{\alpha '}}{pq'r}\right)\right)\mathcal{I}(n_1^2n_2,n,q)\overline{\mathcal{I}(n_1^2n_2,n',q')}\,,
\end{aligned}
\end{equation}where $q=q_1q_2$ and $q'=q_1q'_2$. We apply Poisson summation formula on the $n_2$ sum with modulus $pq_1q_2q_2'r/n_1$. After a change of variable in the Fourier transform, we arrive at
\begin{equation}\label{eq21}
\Omega\ll \frac{M_0L^{4/3}}{n_1^2N_1^{1/2}}\sum_{l,l'\sim N}|\overline{\lambda_{\pi}(1,l)}\lambda_{\pi}(1,l')|\sum_{n,n'\sim N_1}|\lambda(n)\overline{\lambda(n')}|\sum_{\substack{q_2,q'_2\sim C/q_1\\(q,pl)=1\\(q',pl')=1}}\sum_{n_2\in\mathbb{Z}}|\mathcal{C}||\mathcal{J}|\,,
\end{equation} where the character sum $\mathcal{C}$ is given by 
\begin{equation}\label{510}
\begin{aligned}
\mathcal{C}=&\sideset{}{^*}\sum_{u (p)}\sideset{}{^*}\sum_{u' (p)}\left(\sum_{\substack{b (p)\\b\neq ul}}\bar{\chi} (b)e\left(\frac{n\overline{q^2l(u-b\bar{l})}}{p}\right)\right)\left(\sum_{\substack{b' (p)\\b'\neq u'l'}}\chi (b')e\left(\frac{-n'\overline{{q'}^2l'(u'-b'\bar{l'})}}{p}\right)\right)\\
& \times \left(\sum_{d|q}\sum_{d'|q'}dd'\mu (q/d)\mu (q'/d')\sideset{}{^*}\sum_{\alpha (\frac{pqr}{n_1})}\sideset{}{^*}\sum_{\substack{\alpha ' (\frac{pq'r}{n_1})\\q_2'\bar{\alpha}-q_2\bar{\alpha'}\equiv n_2 (\frac{prq_2q_2'q_1}{n_1})\\n_1\alpha\equiv -n\bar{l}(d)\\n_1\alpha'\equiv -n'\bar{l'}(d')}}e\left(\frac{n_1\alpha\overline{uq^2}-n_1\alpha '\overline{u'{q'}^2}}{p}\right)\right)\,,
\end{aligned}
\end{equation} and the integral transform $\mathcal{J}$ is given by
\begin{equation}
\mathcal{J}=\int_{\mathbb{R}}W(w)\mathcal{I}(p^{-3\epsilon}M_0w,n,q)\overline{\mathcal{I}(p^{-\epsilon}M_0w,n',q')}e\left(-\frac{p^{-3\epsilon}M_0n_2w}{n_1prq_2q_2'q_1}\right)dw\,,
\end{equation}where $\mathcal{I}$ as in \eqref{iinte}. Note that the oscillation of $w$ in $\mathcal{I}(p^{-3\epsilon}M_0w,n,q)\overline{\mathcal{I}(p^{-3\epsilon}M_0w,n',q')}$ is of size
\begin{equation}\label{eo}
\frac{(p^{-3\epsilon}NM_0L)^{1/3}}{pCr^{1/3}}\gg p^{-2\epsilon/3}
\end{equation}
It follows by repeated integration by parts in the $w$ integral that $\mathcal{J}$ is negligibly small unless
\begin{equation}
\frac{p^{\epsilon}(p^{-3\epsilon}NM_0L)^{1/3}}{pCr^{1/3}}\gg \frac{p^{-3\epsilon}q_1M_0n_2}{n_1C^2r},
\end{equation}that is,
\begin{equation}\label{515}
n_2\ll \frac{p^{3\epsilon}n_1r^{2/3}N^{1/3}L^{1/3}C}{q_1pM_0^{2/3}}\ll \frac{p^{3\epsilon}n_1L^{1/2}}{q_1}\left(\frac{N}{p^3}\right)^{1/2}:= N_2\,.
\end{equation}
\begin{remark}\label{Remark 4}
In the case \eqref{comp}, the extra oscillation \eqref{eo} is not present and we get a smaller dual range
\begin{equation}
p^{\epsilon}\gg \frac{q_1\tilde{M}_0n_2}{n_1C^2r},
\end{equation}where $\tilde{M}_0$ as in \eqref{comp}.
\end{remark}
\noindent
For smaller values of $q$, there is oscillation in the integrand of $\mathcal{I}$ and hence we have the following bound
\begin{lemma}\label{int}
For $w\asymp 1$ we have,
\begin{equation}\label{inte}
\mathcal{I}(M_0w,n,q)\ll \frac{p^{1+\epsilon}qQ}{NL}\times\sqrt{\frac{pqr^{1/3}}{(NM_0l)^{1/3}}}= p^{5\epsilon/6}\left(\frac{q}{Q}\right)^{3/2}\,.
\end{equation}and consequently,
\begin{equation}
\mathcal{J}\ll p^{\epsilon}\left(\frac{q}{Q}\right)^3\,.
\end{equation}
\end{lemma}	
\begin{proof}Note that if $q>p^{-\epsilon/3}Q$, then we obtain \eqref{inte} trivially by taking absolute values inside $\mathcal{I}$. Hence we assume $Q/q\geq p^{\epsilon/3}$.

Changing variable $u=z-y$, we get
\begin{equation}\label{5.14}
\mathcal{I}=\int\int\int g(q,x) V(u+y)U(y)e\left(\frac{lNxu}{pqQ}\pm\frac{2\sqrt{nNy}}{pq}\pm\frac{3(NlM_0w(y+u))^{1/3}}{pqr^{1/3}}\right)dy\,du\,dx\,.
\end{equation}  Suppose first $\min \{Q/q,1/|x|\}\leq p^{\epsilon/8}$, that is $x\geq p^{-\epsilon/8}$. Then by the third property in \eqref{delta}, we have $x^jg^{(j)}(q,x)\ll p^{\epsilon/7}$, and hence from the $x$ integral, it follows by repeated integration by parts (after introducing a dyadic partition of unity to the $x$ integral) that 
\begin{equation}\label{u}
u\ll p^{\epsilon/6} (pqQ)/(NL)\ll p^{-\epsilon/6}\,,
\end{equation}
for non-negligible contribution. Now consider the $y$ integral. Changing variable $y\mapsto y^2$, the $y$ integral becomes
\begin{equation}
2\int y V(y^2+u)U(y^2)e(\phi(y,u)) \,dy\,,
\end{equation}where
\begin{equation}
\phi(y,u)=\pm\frac{2\sqrt{nN}y}{pq}\pm\frac{3(NM_0lw(y^2+u))^{1/3}}{pqr^{1/3}}\,.
\end{equation}We have
\begin{equation}\label{s}
\frac{\partial^2\phi(y,u)}{\partial y^2}=\pm\frac{2(NM_0lw)^{1/3}(3u-y^2)}{3pqr^{1/3}(y^2+u)^{5/3}}\gg \frac{(NM_0L)^{1/3}}{pqr^{1/3}}\,,
\end{equation}where the last inequality occurs since $u\ll p^{-\epsilon/6}$. Hence the Lemma follows by \eqref{u} and the second derivative bound for the $y$ integral.
On the other hand, if $\min \{Q/q,1/|x|\}>p^{\epsilon/8}$, then by the first property of \eqref{delta}, we can replace $g$ by $1$. The $x$ integral then saves a factor $pqQ/Nl$ and we are left with the $(u,y)$ integral
\begin{equation}\label{5.24}
\int \int\frac{V(y+u)}{u}e\left(\frac{2\sqrt{nNy}}{pq}\pm\frac{3(NM_0lw(y+u))^{1/3}}{pqr^{1/3}}\right) \,du\,dy.
\end{equation}Now if $u\leq p^{-\epsilon}$, then following the same argument from \eqref{u}-\eqref{s} we get the required square-root factor from the $y$ integral. If $u>p^{-\epsilon}$, then the Lemma follows after applying the second derivative bound for the $u$ integral in \eqref{5.24}.
\end{proof}

It remains to estimate the character sum $\mathcal{C}$. As usual, the cases $n_2=0\bmod p$ and $n_2\neq 0\bmod p$ have to be dealt separately.
\section{The non-zero frequencies modulo $p$ $(n_2\neq 0\bmod p)$}
Note that it follows from \eqref{eq2.8}, \eqref{210} and \eqref{cop} that $(p,rq_2q_2'q_1/n_1)=(q_2q_2',prq_1/n_1)=1$, so that 
\begin{equation}
\mathcal{C}\ll |\mathcal{C}_1\mathcal{C}_2\mathcal{C}_3|, 
\end{equation}where
\begin{equation}\label{eq32}
\begin{aligned}
\mathcal{C}_1=\sideset{}{^*}\sum_{u (p)}\sideset{}{^*}\sum_{u' (p)}\left(\sum_{\substack{b (p)\\b\neq ul}}\bar{\chi} (b)e\left(\frac{n\overline{q^2l(u-b\bar{l})}}{p}\right)\right)\left(\sum_{\substack{b' (p)\\b'\neq u'l'}}\chi (b')e\left(\frac{-n'\overline{{q'}^2l'(u'-b'\bar{l'})}}{p}\right)\right)\\
\times\sideset{}{^*}\sum_{\substack{\alpha,\alpha' (p)\\q_2'\bar{\alpha}-q_2\bar{\alpha'}\equiv n_2 (p)}}e\left(\frac{n_1\alpha\overline{uq^2}-n_1\alpha '\overline{u'{q'}^2}}{p}\right)\,,
\end{aligned}
\end{equation}
\begin{equation}\label{eq33}
\mathcal{C}_2=\sum_{d_1|q_1}\sum_{d_1'|q_1}d_1d_1'\mathop{\sideset{}{^*}\sum_{\substack{\alpha(\frac{rq_1}{n_1})\\n_1\alpha=-n\bar{l}(d_1)}}\,\,\,\,\,\sideset{}{^*}\sum_{\substack{\alpha'(\frac{rq_1}{n_1})\\n_1\alpha'=-n'\bar{l'}(d_1')}}}_{q_2'\bar{\alpha}-q_2\bar{\alpha'}=n_2(\frac{rq_1}{n_1})}1\,,
\end{equation}and
\begin{equation}\label{eq34}
\mathcal{C}_3=\mathop{\sum\sum}_{\substack{d_2|q_2\\d_2'|q_2'}}d_2d_2'\mathop{\sideset{}{^*}\sum_{\substack{\alpha(q_2)\\n_1\alpha=-n\bar{l}(d_2)}}\,\,\,\,\,\sideset{}{^*}\sum_{\substack{\alpha'(q_2')\\n_1\alpha'=-n'\bar{l'}(d_2')}}}_{q_2'\bar{\alpha}-q_2\bar{\alpha'}=n_2(q_2q_2')}1\,.
\end{equation}We proceed for the estimation of $\mathcal{C}_1,\mathcal{C}_2$ and $\mathcal{C}_3$ according as $n_1=0\bmod p$ or not. 
\subsection{($n_1\neq 0\bmod p$)}We begin with the following estimate of $\mathcal{C}_1$, which is proved by Will Sawin in the appendix to this paper.
\begin{lemma}\label{c1}
For $n_1,n_2\neq 0\bmod p$, we have
\begin{equation}
\mathcal{C}_1\ll p^{5/2}.
\end{equation}
\end{lemma}	
\begin{proof}
We begin with some elementary transformations. Changing  $b$ to $blu$, the $(b,u)$ sum in \eqref{eq32} becomes
\begin{equation}\label{ub}
\begin{aligned}
&\sideset{}{^*}\sum_{u (p)}\sum_{\substack{b (p)\\b\neq ul}}\bar{\chi} (b)e\left(\frac{n\overline{q^2l(u-b\bar{l})}+n_1\alpha\overline{uq^2}}{p}\right)\\
&=\bar{\chi}(l)\sum_{\substack{b (p)\\b\neq 1}}\bar{\chi}(b)\sideset{}{^*}\sum_{u (p)}\bar{\chi}(u)e\left(\frac{\overline{u}\left(n\overline{q^2l(1-b)}+n_1\alpha\overline{q^2}\right)}{p}\right)\\
&=\bar{\chi}(l)g_{\chi}\sum_{\substack{b (p)\\b\neq 1}}\bar{\chi}(b)\bar{\chi}\big(n\overline{q^2l(1-b)}+n_1\alpha\overline{q^2}\big)\\
&=\bar{\chi}(l)\cdot g_{\chi}\cdot\bar{\chi}(n_1\overline{q^2})\sideset{}{^*}\sum_{\substack{b (p)\\}}\bar{\chi}(1-b)\bar{\chi}(n\overline{n_1l}\cdot\bar{b}+\alpha ).
\end{aligned}
\end{equation}Similarly, changing $b'$ to $b'l'u'$, the $(b',u')$ sum in \eqref{eq32} becomes
\begin{equation}\label{ub2}
\begin{aligned}
&\sideset{}{^*}\sum_{u' (p)}\sum_{\substack{b' (p)\\b'\neq u'l'}}\chi (b')e\left(\frac{-n'\overline{q'^2l'(u'-b'\bar{l'})}-n_1\alpha'\overline{u'q'^2}}{p}\right)\\
&=-\chi(l')g_{\bar{\chi}}\cdot\chi(n_1\overline{q'^2})\sideset{}{^*}\sum_{b' (p)}\chi(1-b')\chi(
n'\overline{n_1l'}\cdot\bar{b'}+\alpha').
\end{aligned}
\end{equation}Substituting the above two simplifications into \eqref{eq32}, and changing $(\alpha,\alpha')\mapsto (\overline{\alpha}, \overline{\alpha'})$ and using the congruence condition between $\alpha, \alpha'$, we get
\begin{equation}\notag
\mathcal{C}_1\ll p \Bigg|\sideset{}{^*}\sum_{\substack{  \alpha(p)  \\ \alpha \neq - k_2/k_1}} \sideset{}{^*}\sum_{b, b'(p)} \bar{\chi}(b+1) \bar{\chi}(c_1 \overline{b} + \overline{\alpha} ) \chi (b'+1) \chi ( c_2 \overline{b'} + \overline{k_1\alpha + k_2})\Bigg|,
\end{equation}with $c_1=-n\overline{n_1l}, c_2=-n'\overline{n_1l'}, k_1=\overline{q_2}q_2'$ and $k_2=-\overline{q_2}n_2$. Since $k_2\neq 0\bmod p$ by our assumption, the lemma follows from Theorem \ref{appendix-main-thm} in the appendix.
\end{proof}
\begin{lemma}\label{c2}We have
\begin{equation}\label{pr1}
\mathcal{C}_2\ll d(q_1)q_1\sum_{d_1'|q_1}d_1'\sum_{\substack{\alpha' \left(\frac{rq_1}{n_1}\right)\\n_1\alpha'=-n'\bar{l'} (d_1')}} 1.
\end{equation}Symmetrically, the same bound holds when $(q_2',l',n',\alpha')$ is replaced by $(q_2,l,n,\alpha)$.
\end{lemma}	
\begin{proof}Recall
\begin{equation}
\mathcal{C}_2=\sum_{d_1|q_1}\sum_{d_1'|q_1}d_1d_1'\mathop{\sideset{}{^*}\sum_{\substack{\alpha(\frac{rq_1}{n_1})\\n_1\alpha=-n\bar{l}(d_1)}}\,\,\,\,\,\sideset{}{^*}\sum_{\substack{\alpha'(\frac{rq_1}{n_1})\\n_1\alpha'=-n'\bar{l'}(d_1')}}}_{q_2'\bar{\alpha}-q_2\bar{\alpha'}=n_2(\frac{rq_1}{n_1})}1\,.
\end{equation}
Also recall the assumptions $(q_2,n_1r)=(q_2',n_1r)=1$ and $q_1|(n_1r)^{\infty}$ from \eqref{cop}. Hence, given $\alpha'$, there can be at most one $\alpha$. Hence we get
\begin{equation}\label{pr1}
\mathcal{C}_2\ll\sum_{d_1|q_1}d_1\sum_{d_1'|q_1}d_1'\sum_{\substack{\alpha' \left(\frac{rq_1}{n_1}\right)\\n_1\alpha'=-n'\bar{l'} (d_1')}} 1. \ll d(q_1)q_1\sum_{d_1'|q_1}d_1'\sum_{\substack{\alpha' \left(\frac{rq_1}{n_1}\right)\\n_1\alpha'=-n'\bar{l'} (d_1')}} 1.
\end{equation}
\end{proof}
\begin{lemma}\label{c3}We have
\begin{equation}
\mathcal{C}_3\ll d(q_2)(q_2,q_2'n_1l+nn_2)\sum_{d_2'|q_2'}d_2'\mathop{\sum_{\alpha(q_2)}\,\,\,\,\,\sum_{\substack{\alpha'(q_2')\\n_1\alpha'=-n'\bar{l'}(d_2')}}}_{q_2'\bar{\alpha}-q_2\bar{\alpha'}=n_2(q_2q_2')}1.
\end{equation}
\end{lemma}	
\begin{proof}
We have
\begin{equation}
\mathcal{C}_3=\mathop{\sum\sum}_{\substack{d_2|q_2\\d_2'|q_2'}}d_2d_2'\mathop{\sideset{}{^*}\sum_{\substack{\alpha(q_2)\\n_1\alpha=-n\bar{l}(d_2)}}\,\,\,\,\,\sideset{}{^*}\sum_{\substack{\alpha'(q_2')\\n_1\alpha'=-n'\bar{l'}(d_2')}}}_{q_2'\bar{\alpha}-q_2\bar{\alpha'}=n_2(q_2q_2')}1\,.
\end{equation}
Recall $(n_1,q_2q_2')=1$ from \eqref{cop}. Hence we get $\alpha=-n\overline{ln_1}\bmod d_2$. Note that it is implicit in congruence conditions that $(n,d_2)=(n',d_2')=1$.  Then using the congruence relation modulo $q_2q_2'$, we obtain $-q_2'n_1l\bar{n}=n_2\bmod d_2$.
Hence
\begin{equation}\label{pr2}
\begin{aligned}
\mathcal{C}_3&\ll\sum_{d_2|(q_2,q_2'n_1l+nn_2)}d_2\sum_{d_2'|q_2'}d_2'\mathop{\sum_{\alpha(q_2)}\,\,\,\,\,\sum_{\substack{\alpha'(q_2')\\n_1\alpha'=-n'\bar{l'}(d_2')}}}_{q_2'\bar{\alpha}-q_2\bar{\alpha'}=n_2(q_2q_2')}1\\
&\ll d(q_2)(q_2,q_2'n_1l+nn_2)\sum_{d_2'|q_2'}d_2'\mathop{\sum_{\alpha(q_2)}\,\,\,\,\,\sum_{\substack{\alpha'(q_2')\\n_1\alpha'=-n'\bar{l'}(d_2')}}}_{q_2'\bar{\alpha}-q_2\bar{\alpha'}=n_2(q_2q_2')}1.
\end{aligned}
\end{equation}
\end{proof}
\noindent
Combining Lemma \ref{c1}, Lemma \ref{c2} and Lemma \ref{c3} we obtain,
\begin{lemma}\label{char}
For $n_1,n_2\neq 0\bmod p$, we have
\begin{equation}
\mathcal{C}\ll
p^{5/2}q_1(q_2,q_2'n_1l+nn_2)\mathop{\sum_{\alpha(q_2)}\,\,\,\,\,\sum_{\alpha'(q_2')}}_{q_2'\bar{\alpha}-q_2\bar{\alpha'}=n_2(q_2q_2')}\sum_{\beta' \left(\frac{rq_1}{n_1}\right)}\mathop{\sum_{d_2'|q_2'}\sum_{d_1'|q_1}}_{\substack{n_1\alpha'=-n'\bar{l'} (d_2')\\n_1\beta'=-n'\bar{l'}(d_1')}}d_2'd_1'\,.
\end{equation}
\end{lemma}Note that similar bounds holds when the roles of $(q_2',l',n',\alpha')$ and $(q_2,l,n,\alpha)$ are interchanged.

\subsection{$n_1=0\,(\bmod\, p)$} In this case, the last exponential factor in \eqref{eq32} is not present and consequently the character sum $\mathcal{C}_1$ is much simpler. Indeed, we now have
\begin{equation}\label{620}
\begin{aligned}
\mathcal{C}_1&=p\sideset{}{^*}\sum_{u (p)}\sideset{}{^*}\sum_{u' (p)}\left(\sum_{\substack{b (p)\\ b\neq ul}}\bar{\chi} (b)e\left(\frac{n\overline{q^2l(u-b\bar{l})}}{p}\right)\right)\left(\sum_{\substack{b' (p)\\ b'\neq u'l'}}\chi (b')e\left(\frac{-n'\overline{{q'}^2l'(u'-b'\bar{l'})}}{p}\right)\right)\\
&=p\sideset{}{^*}\sum_{b,b' (p)}\bar{\chi}(b)\chi(b')\sum_{\substack{k,k' (p)\\k\neq 0,-\bar{b}l\\k'\neq 0,-\bar{b'}l'}}e\left(\frac{nk\overline{q^2l}}{p}-\frac{n'k'\overline{q'^2l'}}{p}\right)\\
&=p\sideset{}{^*}\sum_{b,b' (p)}\bar{\chi}(b)\chi(b')e\left(\frac{nb\overline{q^2}}{p}-\frac{n'b'\overline{q'^2}}{p}\right)\ll p^2.
\end{aligned}
\end{equation}The estimation of $\mathcal{C}_2$ and $\mathcal{C}_3$ remains the same as  \eqref{pr1} and \eqref{pr2} respectively. Compared to the earlier case, we save an extra $p^{1/2}$ in the $\mathcal{C}_1$ sum, and consequently, we save more in this case.
\\
\\
Let $\Omega_{\neq 0\bmod p}$ denote the contribution of $n_2\neq 0\bmod p$ to $\Omega$ in \eqref{eq21}, and let $\sum_{\neq 0\bmod p}$ be its contribution to \eqref{eq14}.
\begin{lemma}\label{dd}
We have
\begin{equation}\label{sl}
\begin{aligned}
\Omega_{\neq 0\bmod p}&\ll\frac{p^{5/2}C^3M_0rC^2N_2L^{2+4/3}N_1}{n_1^3Q^3N_1^{1/2}}\left(C+N_1\right),\\
&\ll \frac{r^2L^{3+4/3}p^{11/2}C^5}{n_1^2q_1N^{1/2}}\,.
\end{aligned}
\end{equation}and
\begin{equation}\label{nd}
\sum_{\neq 0\bmod p}\ll N^{3/4}p^{1/2}L^{3/4}r^{1/2}.
\end{equation}
\end{lemma}	
\begin{proof}
We work out the bounds for $n_1\neq 0\bmod p$, since the other case contributes less as pointed out earlier. We substitute the bound from Lemma \ref{char} in \eqref{eq21}. We also use the inequality
\begin{equation}\label{amgm}
|\lambda(n)\overline{\lambda(n')}|\ll |\lambda(n)|^2+|\lambda(n')|^2
\end{equation}and consider the contribution of only the first term $|\lambda(n)|^2$, since the calculation for the second term is absolutely similar. Executing the above steps, we see that the contribution of the case under consideration towards $\Omega_{\neq 0\bmod p}$ is
\begin{equation}\label{eq38}
\begin{aligned}
 &\ll\frac{p^{5/2}M_0L^{4/3}}{n_1^2N_1^{1/2}}\mathop{\sum\sum}_{l,l'\sim L}|\overline{\lambda_{\pi}(1,l)}\lambda_{\pi}(1,l')|\,\,\sum_{\substack{q_2,q'_2\sim C/q_1\\(q,pl)=1\\(q',pl')=1}}\sum_{n_2\ll N_2}\sum_{n\sim N_1}|\lambda_f(n)|^2q_1(q_2,q_2'n_1l+nn_2)\\
&\quad\quad\quad\quad\quad\quad\quad\quad\quad\quad\quad\times\mathop{\sum_{\alpha(q_2)}\,\,\,\,\,\sum_{\alpha'(q_2')}}_{q_2'\bar{\alpha}-q_2\bar{\alpha'}=n_2(q_2q_2')}\sum_{\beta' \left(\frac{rq_1}{n_1}\right)} \sum_{d_2'|q_2'}\sum_{d_1'|q_1}d_2'd_1' \sum_{\substack{n'\sim N_1\\n_1\alpha'=-n'\bar{l'} (d_2')\\n_1\beta'=-n'\bar{l'}(d_1')}}|\mathcal{J}|.
\end{aligned}
\end{equation}
We substitute the bound for $\mathcal{J}\ll (q/Q)^3$ from Lemma \eqref{int}. We next execute the sum over $n'$ to get
\begin{equation}\label{627}
\begin{aligned}
 \sum_{d_2'|q_2'}\sum_{d_1'|q_1}d_2'd_1' \sum_{\substack{n'\sim N_1\\n_1\alpha'=-n'\bar{l'} (d_2')\\n_1\beta'=-n'\bar{l'}(d_1')}}&\ll  \sum_{d_2'|q_2'}\sum_{d_1'|q_1}d_2'd_1' \left(1+\frac{(d_1',d_2')N_1)}{d_1'd_2'}\right)\,,\\
 &\ll (q_1q_2'+(q_1,q_2')N_1)\,.
 \end{aligned}
\end{equation}The sum over $\alpha , \alpha' $ and $\beta' $ in \eqref{eq38} is now bounded by $(q_2,n_2)rq_1/n_1$. Substituting, we get that \eqref{eq38} is
\begin{equation}\label{6.27}
\begin{aligned}
&\ll\frac{p^{5/2}C^3M_0L^{4/3}q_1^2r}{n_1^3Q^3N_1^{1/2}}\mathop{\sum\sum}_{l,l'\sim L}|\overline{\lambda_{\pi}(1,l)}\lambda_{\pi}(1,l')|\\
&\times\sum_{q_2'\sim C/q_1}\sum_{n_2\ll N_2}\sum_{n\sim N_1}|\lambda(n)|^2(q_1q_2'+(q_1,q_2')N_1)\sum_{q_2\sim C/q_1}(q_2,n_2)(q_2,q_2'n_1l+nn_2)\,.
\end{aligned}
\end{equation}
The over $q_2$ is then bounded by $(n_2,q_2'n_1l)C/q_1$, which summed over $n_2$ is bounded by $N_2C/q_1$. Hence we are left with
\begin{equation}\label{628}
\begin{aligned}
&\frac{p^{5/2}C^3M_0L^{4/3}q_1rCN_2N_1^{7/32}}{n_1^3Q^3N_1^{1/2}}\mathop{\sum\sum}_{l,l'\sim L}|\overline{\lambda_{\pi}(1,l)}\lambda_{\pi}(1,l')|\,\,\sum_{q_2'\sim C/q_1}\sum_{n\sim N_1}|\lambda(n)|^2(q_1q_2'+(q_1,q_2')N_1)\,.
\end{aligned}
\end{equation}
Executing the remaining sum we get
\begin{equation}\label{6.29}
\frac{p^{5/2}C^3M_0q_1rC^2N_2L^{2+4/3}N_1}{n_1^3Q^3N_1^{1/2}}\left(\frac{C}{q_1}+\frac{N_1}{q_1}\right)\,,
\end{equation}
where we have used the Ramanujan bound on average \eqref{rama} for $l,l'$ and $n$ sum. This is the first line of the lemma. Note that after substituting $N_1\ll N_0$, the contribution from $N_1$ in the parenthesis dominates the other. Substituting $N_2=(p^{\epsilon}n_1L^{1/2}N^{1/2})/(q_1p^{3/2})$, $N_1\ll N_0=p^{1+\epsilon}L, M_0=rp^{3/2+\epsilon}N^{1/2}L^{1/2}$ and $Q=\sqrt{(NL)/p}$, we get the second line of the first part of the lemma.

Using the bound from the second line of \eqref{sl} in \eqref{eq14}, we get
\begin{equation}\label{630}
\begin{aligned}
&\sum_{\neq 0\bmod p}\\
&\ll \frac{N^{17/12}}{r^{2/3}p^3QC^{5/2}L}\left(\frac{r^2L^{3+4/3}p^{11/2}C^5}{N^{1/2}}\right)^{1/2}\sum_{n_1\ll Cpr}\frac{n_1^{1/3}\Theta^{1/2}}{n_1}\sum_{\frac{n_1}{(n_1,pr)}|q_1|(n_1r)^{\infty}}\frac{1}{q_1^{1/2}}\\
&\ll N^{2/3}p^{1/4}L^{2/3}r^{1/3}\sum_{n_1}\frac{(n_1,pr)^{1/2}\Theta^{1/2}}{n_1^{7/6}}\,,
\end{aligned}
\end{equation}where 
\begin{equation}
\Theta=\sum_{n_2\ll M_0/n_1^2}\frac{|\lambda_{\pi}(n_1,n_2)|^2}{n_2^{2/3}}\,.
\end{equation}
An application of Cauchy-Schwarz gives
\begin{equation}\label{632}
\sum_{n_1\ll Cpr}\frac{(n_1,pr)^{1/2}\Theta^{1/2}}{n_1^{7/6}}\ll \left(\sum_{n_1\ll pQr}\frac{(n_1,pr)}{n_1}\right)^{1/2}\left(\sum_{n_1^2n_2\ll M_0}\frac{|\lambda_{\pi}(n_1,n_2)|^2}{(n_1^2n_2)^{2/3}}\right)^{1/2}\ll M_0^{1/6}\,,
\end{equation}where in the last sum we applied the Ramanujan bound on average \eqref{rama} and partial summation. The second part of the lemma follows after substituting \eqref{632} in \eqref{630}.
\end{proof}
\section{ The zero frequencies modulo $p$}\label{zerofreq}We proceed to obtain bounds for the sum character sum $\mathcal{C}$ in \eqref{510} for $n_2=0\bmod p $. First, we note that from the congruence relation
\begin{equation}
q_2'\bar{\alpha}-q_2\bar{\alpha'}\equiv n_2 (\frac{prq_2q_2'q_1}{n_1})
\end{equation}in $\mathcal{C}$ \eqref{510}, $n_2=0\bmod p$ implies
\begin{equation}\label{7.4}
\alpha'=\overline{q'_2}q_2\alpha\bmod p.
\end{equation}Substituting \eqref{7.4} in $\mathcal{C}$, and executing the sum over $\alpha\bmod p$ part, we get
\begin{equation}
\begin{aligned}
\mathcal{C}=&\sideset{}{^*}\sum_{u (p)}\sideset{}{^*}\sum_{u' (p)}\left(\sum_{\substack{b (p)\\b\neq ul}}\bar{\chi} (b)e\left(\frac{n\overline{q^2l(u-b\bar{l})}}{p}\right)\right)\left(\sum_{\substack{b' (p)\\b'\neq u'l'}}\chi (b')e\left(\frac{-n'\overline{{q'}^2l'(u'-b'\bar{l'})}}{p}\right)\right)\\
& \times \left(\sum_{d|q}\sum_{d'|q'}dd'\mu (q/d)\mu (q'/d')\sideset{}{^*}\sum_{\alpha (\frac{qr}{n_1})}\sideset{}{^*}\sum_{\substack{\alpha ' (\frac{q'r}{n_1})\\q_2'\bar{\alpha}-q_2\bar{\alpha'}\equiv n_2 (\frac{rq_2q_2'q_1}{n_1})\\n_1\alpha\equiv -n\bar{l}(d)\\n_1\alpha'\equiv -n'\bar{l'}(d')}}\left(p\delta_{uq_2^3=u'q_2'^3}-1\right)\right)\,,
\end{aligned}
\end{equation}Rearranging we get
\begin{equation}\label{7.9}
\mathcal{C}\leq \left(p|\mathcal{D}_0|+|\mathcal{D}_1|\right)|\mathcal{D}_2|
\end{equation}where
\begin{equation}\label{d0d1}
\begin{aligned}
&\mathcal{D}_0=\sideset{}{^*}\sum_{u (p)}\left(\sum_{\substack{b (p)\\b\neq ul}}\bar{\chi} (b)e\left(\frac{n\overline{q^2l(u-b\bar{l})}}{p}\right)\right)\left(\sum_{\substack{b' (p)\\ b'\neq q_2^3\overline{q_2'}^3ul'}}\chi (b')e\left(\frac{-n'\overline{{q'}^2l'(q_2^3\overline{q'_2}^3u-b'\bar{l'})}}{p}\right)\right)\,,\\
&\mathcal{D}_1=\sideset{}{^*}\sum_{u (p)}\sideset{}{^*}\sum_{u' (p)}\left(\sum_{\substack{b (p)\\b\neq ul}}\bar{\chi} (b)e\left(\frac{n\overline{q^2l(u-b\bar{l})}}{p}\right)\right)\left(\sum_{\substack{b' (p)\\b'\neq u'l'}}\chi (b')e\left(\frac{-n'\overline{{q'}^2l'(u'-b'\bar{l'})}}{p}\right)\right),
\end{aligned}
\end{equation} and
\begin{equation}
\mathcal{D}_2=\sum_{d|q}\sum_{d'|q'}dd'\sideset{}{^*}\sum_{\alpha (\frac{qr}{n_1})}\sideset{}{^*}\sum_{\substack{\alpha ' (\frac{q'r}{n_1})\\q_2'\bar{\alpha}-q_2\bar{\alpha'}\equiv n_2 (\frac{rq_2q_2'q_1}{n_1})\\n_1\alpha\equiv -n\bar{l}(d)\\n_1\alpha'\equiv -n'\bar{l'}(d')}}1.
\end{equation}We obtain the bounds for the character sums $\mathcal{D}_0,\mathcal{D}_1$ in the following two lemmas. For $\mathcal{D}_0$, we consider the following general sum which we will frequently encounter in this section.
\begin{lemma}\label{charb}
For $c_1,c_2,\beta \in \mathbb{Z}$, define
\begin{equation}\label{71}
\tilde{\mathcal{C}}:=\sideset{}{^*}\sum_{u\bmod p}\,\,\sideset{}{^*}\sum_{\substack{b\bmod p\\b\neq u}}\,\,\sideset{}{^*}\sum_{\substack{b'\bmod p\\b'\neq\beta u}}\bar{\chi}(b)\chi(b')e\left(\frac{c_1\overline{u-b}-c_2\overline{\beta u-b'}}{p}\right).
\end{equation}We have
\begin{equation}
\tilde{\mathcal{C}}\ll
\begin{cases}
p^{2}, & \quad \beta c_1=c_2\bmod p,\\
p, & \quad \text{otherwise}.
\end{cases}
\end{equation}
\end{lemma}	
\begin{proof}
Changing variables $\overline{u-b}\mapsto b_1$ and $\overline{u'-b'}\mapsto b_2$, we obtain
\begin{equation}\label{zerocharactersum}
\begin{aligned}
\tilde{\mathcal{C}}&=\sideset{}{^*}\sum_{u (p)}\,\,\sideset{}{^*}\sum_{\substack{b_1 (p)\\}}\,\,\sideset{}{^*}\sum_{\substack{b_2 (p)\\}}\bar{\chi}(u-\bar{b}_1)\chi(\beta u-\bar{b}_2)e\left(\frac{c_1b_1-c_2b_2}{p}\right)\\
&=\sideset{}{^*}\sum_{u (p)}\,\,\sideset{}{^*}\sum_{\substack{b_1 (p)\\}}\,\,\sideset{}{^*}\sum_{\substack{b_2 (p)\\}}\bar{\chi}(b_1b_2u-b_2)\chi(b_1b_2\beta u-b_1)e\left(\frac{c_1b_1-c_2b_2}{p}\right)\\
&=\sideset{}{^*}\sum_{u (p)}\,\,\sideset{}{^*}\sum_{\substack{b_1 (p)\\}}\,\,\sideset{}{^*}\sum_{\substack{b_2 (p)\\}}\bar{\chi}(u-b_2)\chi(\beta u-b_1)e\left(\frac{c_1b_1-c_2b_2}{p}\right)\\
&=\sideset{}{^*}\sum_{u (p)}\sideset{}{^*}\sum_{\substack{b_1 (p)\\}}e\left(\frac{\beta c_1 u}{p}\right)\chi(\beta u-b_1)e\left(\frac{-c_1(\beta u-b_1)}{p}\right)\\
&\quad\quad\quad\quad\quad\quad\quad\quad\quad\quad\quad\quad\quad\sideset{}{^*}\sum_{\substack{b_2 (p)\\}}e\left(\frac{-c_2u}{p}\right)\bar{\chi}(u-b_2)e\left(\frac{c_2(u-b_2)}{p}\right)
\end{aligned}
\end{equation}First suppose $c_1=c_2=0\bmod p$. Then the $b_1$ and $b_2$ sum above is $-\chi(\beta u)$ and $-\bar{\chi}(u)$ respectively. Multiplying and summing over $u$, we get the desired bound in this case.

Next suppose $c_1=0\bmod p$ and $c_2\neq 0\bmod p$. Then the $b_1$ sum above evaluates to $-\chi(\beta u)$ and the $b_2$ sum is $\chi(c_2)g_{\bar{\chi}}e(-c_2u/p)-\bar{\chi}(u)$. Hence
\begin{equation}\notag
\tilde{\mathcal{C}}=\sideset{}{^*}\sum_{u (p)}(-g_{\bar{\chi}}\chi(c_2)\chi(\beta u)e(-c_2u/p)+\bar{\chi}(u)\chi(\beta u))\ll p,
\end{equation}in this case as well. Similar arguments holds for the $c_2=0 \bmod p$ and $c_1\neq 0\bmod p$ case.

Now suppose $c_1,c_2\neq 0\bmod p$, but $\beta =0\bmod p$. Then the $b_1$ sum is the Gauss sum $\bar{\chi}(-c_1)g_{\chi}$ and the $b_2$ sum is $\chi(c_2)g_{\bar{\chi}}e(-c_2u/p)-\bar{\chi}(u)$, and we get
\begin{equation}\notag
\tilde{\mathcal{C}}=\bar{\chi}(-c_1)g_{\chi}\sideset{}{^*}\sum_{u (p)}(g_{\bar{\chi}}\chi(c_2)e(-c_2u/p)-\bar{\chi}(u))\ll p.
\end{equation}

Hence for the rest of the argument, we assume $c_1,c_2,\beta\neq 0\bmod p$. Evaluating the last $b_1,b_2$ sum in \eqref{zerocharactersum}, we obtain,
\begin{equation}\notag
\begin{aligned}
\tilde{\mathcal{C}}&=\bar{\chi}(-c_1)\chi(c_2)\sideset{}{^*}\sum_{u (p)}\left(g_{\chi}e\left(\frac{\beta c_1u}{p}\right)-\chi (-\beta c_1u)\right)\left(g_{\bar{\chi}}e\left(\frac{-c_2u}{p}\right)-\bar{\chi}(c_2u)\right)\\
&=\bar{\chi}(-c_1)\chi(c_2)\sideset{}{^*}\sum_{u (p)}\Bigg(p\cdot e\left(\frac{(\beta c_1-c_2)u}{p}\right)-g_{\chi}\cdot\bar{\chi}(c_2u)e\left(\frac{\beta c_1u}{p}\right)\\
 &\quad\quad\quad\quad\quad\quad\quad\quad\quad\quad\quad\quad-g_{\bar{\chi}}\cdot \chi(-\beta c_1u)e\left(\frac{-c_2u}{p}\right)+\chi(-\beta c_1u)\bar{\chi}(c_2u)\Bigg)\\
 &\ll p^2\delta_{\beta c_1=c_2 \bmod p}+p,
\end{aligned}
\end{equation}and the lemma follows.
\end{proof}
\begin{lemma}\label{•}
With $\mathcal{D}_1$ as in \eqref{d0d1}, we have
\begin{equation}\label{7.12}
\mathcal{D}_1\ll  p.
\end{equation}
\end{lemma}	
\begin{proof}
Consider the $(b,u)$ sum in $\mathcal{D}_1$. Executing the $u$ sum first we obtain
\begin{equation}\notag
\sideset{}{^*}\sum_{\substack{u (p)\\u\neq b\bar{l}}}e\left(\frac{n\overline{q^2l(u-b\bar{l})}}{p}\right)=p\delta_{n=0 (p)}-1-e\left(\frac{-n\overline{q^2b}}{p}\right).
\end{equation}Substituting, the $(b,u)$ sum in $\mathcal{D}_1$ \eqref{d0d1} becomes
\begin{equation}\notag
\sum_{b(p)}\bar{\chi}(b)\sideset{}{^*}\sum_{\substack{u (p)\\u\neq b\bar{l}}}e\left(\frac{n\overline{q^2l(u-b\bar{l})}}{p}\right)=-\bar{\chi}(-n\overline{q^2})g_{\chi}\ll p^{1/2}.
\end{equation}One similarly obtains that the $(b',u')$ sum in $\mathcal{D}_1$ is $\ll p^{1/2}$, and the lemma follows.
\end{proof}
\noindent
We combine the above results to get
\begin{lemma}\label{dc}
We have
\begin{equation}\label{7.13}
\mathcal{C}\ll
\begin{cases}
p^3\mathcal{C}_0, &\quad n_2=0, p|(n\bar{l}-n'\bar{l'})\,\, \hbox{and}\,\, p\nmid n_1\\
\frac{p^2qr}{n_1}\mathcal{A}, &\quad n_2=0, p\nmid (n\bar{l}-n'\bar{l'})\,\,\hbox{and} \,\,  p\nmid n_1\\
p^{3}|\mathcal{C}_2\mathcal{C}_3|, &\quad n_2=0(\bmod p), n_2\neq 0\,\,\hbox{and}\,\, p\nmid n_1
\end{cases}
\end{equation}
where
\begin{equation*}
\mathcal{C}_0=\sum_{d|q}\sum_{d'|q}dd'\sum_{\substack{\alpha (\frac{qr}{n_1})\\ n_1\alpha\equiv -n\bar{l}(d)\\n_1\alpha\equiv -n'\bar{l'}(d')}} 1\,,
\end{equation*}
\begin{equation*}
\mathcal{A}=\mathop{\sum\sum}_{\substack{d,d'|q\\ (d,d')|(nl'-n'l)}}(d,d')\,,
\end{equation*} and $\mathcal{C}_2$ and $\mathcal{C}_3$ as in \eqref{eq33} and \eqref{eq34} respectively.
\end{lemma}Note that the $p|n_1$ case is already being considered in \eqref{620}, which was independent of whether $n_2=0\bmod p$ or not.

\begin{proof}
First note that, from the congruence condition
\begin{equation}
q_2'\bar{\alpha}-q_2\bar{\alpha'}\equiv n_2 (\frac{rq_2q_2'q_1}{n_1})
\end{equation}in $\mathcal{D}_2$, $n_2=0$ implies $q_2=q_2'$ and $\alpha=\alpha'$. 
\\
\\
\noindent
Hence, in the first of case of \eqref{7.13}, $\mathcal{D}_0$ is the sum $\tilde{\mathcal{C}}$ in \eqref{71} with $c_1=c_2=n\overline{q^2l}, \beta=1$, multiplied by a factor of $\chi(\bar{l}l')$. In this case, the Lemma \ref{charb} gives
\begin{equation}\label{eq7.16}
\mathcal{D}_0\ll p^{2}.
\end{equation}Also we have
\begin{equation}\label{eq7.17}
\mathcal{D}_2\ll\sum_{d|q}\sum_{d'|q}dd'\sum_{\substack{\alpha (\frac{qr}{n_1})\\ n_1\alpha\equiv -n\bar{l}(d)\\n_1\alpha\equiv -n'\bar{l'}(d')}} 1\,,
\end{equation}Hence the first case of the Lemma follows from \eqref{7.9}, \eqref{7.12}, \eqref{eq7.16} and \eqref{eq7.17}.
\\
\\
\noindent
In the second case, $\mathcal{D}_0$ is the sum $\tilde{\mathcal{C}}$ in \eqref{71} with $c_1=n\overline{q^2l}, c_2=n'\overline{q^2l'}$ and $\beta=1$, multiplied by a factor of $\chi(\bar{l}l')$. In this case we have $\beta c_1\neq c_2\bmod p$ and hence Lemma \ref{charb} gives
\begin{equation}\label{eq48}
\mathcal{D}_0\ll p\,.
\end{equation}Also, we have
\begin{equation}\label{eq718}
\mathcal{D}_2\ll \sum_{d|q}\sum_{d'|q}dd'\sum_{\substack{\alpha (\frac{qr}{n_1})\\ n_1\alpha\equiv -n\bar{l}(d)\\n_1\alpha\equiv -n'\bar{l'}(d')}}1 \ll \frac{qr}{n_1}\mathop{\sum\sum}_{\substack{d,d'|q\\ (d,d')|(nl'-n'l)}}(d,d')\,
\end{equation} and the second part of the Lemma follows after combining \eqref{7.9}, \eqref{7.12}, \eqref{eq48} and \eqref{eq718}.
\\
\\
\noindent
In the last case, the number of such $n_2$ is $\ll N_2/p$. The bound in the Lemma in this case follows after using
\begin{equation}
\mathcal{D}_0\ll p^2
\end{equation}from Lemma \ref{charb} and observing that
\begin{equation}
\mathcal{D}_2\ll |\mathcal{C}_2\mathcal{C}_3|
\end{equation} since $(q_2q_2',rq_1/n_1)=1$, where $\mathcal{C}_2$ and $\mathcal{C}_3$ as in \eqref{eq33} and \eqref{eq34} respectively.
\end{proof}

We now estimate the contribution of each of the cases of Lemma \ref{dc} to \eqref{eq14}. Let $\sum_{0}, \sum_{\mathcal{A}}$ and $\sum_{\mathcal{B}}$ denote the corresponding contribution of the cases in the above lemma towards \eqref{eq14}.
\begin{lemma}\label{d}
We have
\begin{equation}
\hbox{$\sum_{0}\ll N^{1/2}pL^{1/2}r^{1/2}+ N^{1/2}pLr^{1/2}+N^{3/4}p^{3/4}r^{1/2}/L^{1/4}$.}
\end{equation}
\end{lemma}	
\begin{proof}
Recall that $n_2=0$ implies $q_2=q'_2$, and hence $q'=q$. Substituting the first bound from  Lemma \ref{dc} and rearranging, we see that the contribution of this case towards $\Omega$ in \eqref{eq21} is dominated by
\begin{equation}\label{nz}
\begin{aligned}
\frac{p^3M_0L^{4/3}}{n_1^2N_1^{1/2}}\mathop{\sum\sum}_{l,l'\sim L}|\overline{\lambda_{\pi}(1,l)}\lambda_{\pi}(1,l')|\sum_{\substack{q_2\sim C/q_1\\(q,pl)=1\\(q,pl')=1}}\sum_{d|q}\sum_{d'|q}dd'&\sum_{n\sim N_1}\sum_{\substack{n'\sim N_1\\p(d,d')|(nl'-n'l)}} |\lambda_f(n)\lambda_f(n')|\\
&\quad\quad\times\sum_{\substack{\alpha (\frac{qr}{n_1})\\ n_1\alpha\equiv -n\bar{l}(d)\\n_1\alpha\equiv -n'\bar{l'}(d')}}|\mathcal{J}|.
\end{aligned}
\end{equation}
As earlier in \eqref{amgm}, we use the inequality
\begin{equation}\label{Amgm}
|\lambda(n)\overline{\lambda(n')}|\ll |\lambda(n)|^2+|\lambda(n')|^2
\end{equation}and consider the contribution of the first term only.
From the last congruence condition in \eqref{nz}, $\alpha$ is determined modulo the l.c.m of $d/(n_1,d)$ and $d'/(n_1,d')$, and note that 
\begin{equation}
\frac{[d,d']}{[(n_1,d),(n_1,d')]}\leq\left[\frac{d}{(n_1,d)},\frac{d'}{(n_1,d')}\right]\leq \frac{q}{(n_1,q)}\,,
\end{equation}where $[a,b]$ denotes the l.c.m. of $a$ and $b$. Since $d,d'|q$ and $(n_1,q_2)=1$, we get $[(n_1,d),(n_1,d')]\leq (n_1,q_1)$, so that the first term in the above inequality is greater than $[d,d']/(n_1,q_1)$. Hence we get
\begin{equation}\label{alphacount}
\sum_{\substack{\alpha (\frac{qr}{n_1})\\ n_1\alpha\equiv -n\bar{l}(d)\\n_1\alpha\equiv -n'\bar{l'}(d')}}1\ll \frac{(n_1,q_1)}{[d,d']}.\frac{qr}{n_1}\ll \frac{qr}{[d,d']}\,.
\end{equation}We substitute this and the bound $(q/Q)^3$ for $\mathcal{J}$ in the last sum of \eqref{nz}.

We next execute the $n'$ sum according to the cases $p(d,d')\ll N_1$, $N_1\ll p(d,d')\ll N_1L$ and $p(d,d)\gg N_1L$. In the first case, $p(d,d')\ll N_1$ , we count $n'$ satisfying the second last congruence in \eqref{nz} to get
\begin{equation}
\begin{aligned}
\frac{p^3M_0L^{4/3}}{n_1^2N_1^{1/2}}\times \frac{q^3}{Q^3}\times qr\times&\mathop{\sum\sum}_{l,l'\sim L}|\overline{\lambda_{\pi}(1,l)}\lambda_{\pi}(1,l')|\\
&\times\sum_{\substack{q_2\sim C/q_1\\(q,pl)=1\\(q,pl')=1}}\sum_{d|q}\sum_{d'|q}\frac{dd'}{[d,d']}\sum_{n\sim N_1}\frac{N_1}{p(d,d')}|\lambda_f(n)|^2\,.
\end{aligned}
\end{equation}
For the $n$ sum, we can now use the Ramanujan bound on average and execute the remaining sum to get
\begin{equation}\label{7.17}
\begin{aligned}
&\ll \frac{M_0L^{4/3}}{n_1^2N_1^{1/2}}\times\frac{q^3}{Q^3}\times p^2qr\times\sum_{\substack{l,l'\sim L}}|\overline{\lambda_{\pi}(1,l)}\lambda_{\pi}(1,l')|\sum_{q_2\sim C/q_1}\sum_{d|q}\sum_{d'|q'}N_1^2\,,\\
&\ll \frac{M_0L^{4/3}}{n_1^2N_1^{1/2}}\times\frac{q^3}{Q^3}\times p^2qr\times \frac{L^2CN_1^2}{q_1}\,
\end{aligned}
\end{equation}where we have used the Cauchy-Schwarz inequality and \eqref{rama} for the $l,l'$ sum. Substituting $N_1\ll pL$ and $Q=\sqrt{(NL)/p}$, we see that the above is bounded by
\begin{equation}
\frac{p^{5}M_0L^{2+4/3}rC^5}{n_1^2q_1N^{3/2}}\,.
\end{equation}

Substituting, we see that the contribution of this case in \eqref{eq14} is
\begin{equation}\label{eq24}
\begin{aligned}
&\ll \frac{N^{17/12}}{r^{2/3}p^3QC^{5/2}L}\left(\frac{p^{5}L^{2+4/3}rC^5}{N^{3/2}}\right)^{1/2}M_0^{1/2}\sum_{n_1\ll Cpr}\frac{n_1^{1/3}\Theta^{1/2}}{n_1}\sum_{\frac{n_1}{(n_1,pr)}|q_1|(pn_1)^{\infty}}\frac{1}{q_1^{1/2}}\,,\\
&\ll \frac{N^{1/6}L^{1/6}}{r^{1/6}} M_0^{1/2+1/6}\,,
\end{aligned}
\end{equation}where we used \eqref{632} in the last line. Substituting $M_0=rp^{3/2+\epsilon}N^{1/2}L^{1/2}$ gives the first term of the lemma. 

In the second case ,$N_1\ll p(d,d')\ll N_1L$, for each $n$, there is at most one $n'\sim N_1$ satisfying the congruence relation $p(d,d')|(nl'-n'l)$ (depends on $l,l'$ as well). So, \eqref{nz} in this case (recall that we are considering the contribution only the first term in the right hand side of \eqref{Amgm}) is bounded by
\begin{equation}\label{nz1}
\begin{aligned}
\frac{p^3M_0L^{4/3}}{n_1^2N_1^{1/2}}\times \frac{q^3}{Q^3}\times qr\mathop{\sum\sum}_{l,l'\sim L}|\overline{\lambda_{\pi}(1,l)}\lambda_{\pi}(1,l')|&\sum_{\substack{q_2\sim C/q_1\\(q,pl)=1\\(q,pl')=1}}\sum_{d|q}\sum_{\substack{d'|q\\N_1\ll p(d,d')\ll N_0L }}\frac{dd'}{[d,d']}\\
&\times\sum_{n\sim N_1}|\lambda_f(n)|^2\,.
\end{aligned}
\end{equation} Using the Ramanujan bound on average on the $n$ sum, we bound the above sum by
\begin{equation}\label{721}
\frac{p^2M_0L^{4/3}}{n_1^2N_1^{1/2}}\times\frac{q^3}{Q^3}\times pqr\mathop{\sum\sum}_{l,l'\sim L}|\overline{\lambda_{\pi}(1,l)}\lambda_{\pi}(1,l')|\sum_{q_2\sim C/q_1}\sum_{d|q}\sum_{\substack{d'|q\\N_1\ll p(d,d')\ll N_0L }}(d,d')N_1\,.
\end{equation}
We use the upper bound $(d,d')\ll N_0L/p$ and execute the remaining sum trivially. It is then clear that when $N_1$ is replaced by $N_0$, \eqref{721} differs from \eqref{7.17} by only a factor of $L$. Hence multiplying its square root, $L^{1/2}$,  with the first term of the lemma, gives the second term of the lemma,
\begin{equation}\label{eq26}
 N^{1/2}pLr^{1/2}.
\end{equation}
For the last case $p(d,d)\gg N_1L$,  we must have $nl'-n'l=0$ . In this case we keep the term $|\lambda_f(n)\lambda_f(n')|$ as it is. Since $l$ and $l'$ are primes, there are two possible cases, either $n=n', l=l'$ or $n=\tilde{n}l, n'=\tilde{n}l'$ for some $\tilde{n}\ll N_1/L$. In the latter case, \eqref{nz} is bounded by
\begin{equation}\label{zf}
\begin{aligned}
\ll \frac{p^3M_0L^{4/3}}{n_1^2N_1^{1/2}}\times\frac{q^3}{Q^3}\times qr&\mathop{\sum\sum}_{l,l'\sim L}|\overline{\lambda_{\pi}(1,l)}\lambda_{\pi}(1,l')|\\
&\times\sum_{q_2\sim C/q_1}\sum_{d|q}\sum_{\substack{d'|q\\N_1L\ll p(d,d')}}(d,d')\sum_{\tilde{n}\sim N_1/L}|\lambda_f(\tilde{n}l)\lambda_f(\tilde{n}l')|\,.
\end{aligned}
\end{equation} Using the Hecke relation, one has
\begin{equation}
\lambda_f(\tilde{n}l)=\lambda_f(\tilde{n})\lambda_f(l)-\lambda_f(\tilde{n}/l)\,,
\end{equation}where the second term exists only if $l|\tilde{n}$.
Hence
\begin{equation*}
\sum_{\tilde{n}\sim N_1/L}|\lambda_f(\tilde{n}l)|^2\ll |\lambda_f(l)|^2\sum_{\tilde{n}\sim N_1/L}|\lambda_f(\tilde{n})|^2+\sum_{n'\sim N_1/L^2}|\lambda_f(n')|^2\ll \frac{|\lambda_f(l)|^2N_1}{L}+\frac{N_1}{L^2}\,.
\end{equation*} Using these bounds after an application of Cauchy-Schwarz inequality  in \eqref{zf}, we get
\begin{equation}\label{725}
\begin{aligned}
\frac{p^3M_0L^{4/3}}{n_1^2N_1^{1/2}}\times\frac{q^3}{Q^3} \times qr&\mathop{\sum\sum}_{l,l'\sim L}|\overline{\lambda_{\pi}(1,l)}\lambda_{\pi}(1,l')|\sum_{q_2\sim C/q_1}\sum_{d|q}\sum_{\substack{d'|q\\N_1L\ll p(d,d')}}(d,d')\\
&\times\left(\frac{|\lambda_f(l)|N_1^{1/2}}{L^{1/2}}+\frac{N_1^{1/2}}{L}\right)\left(\frac{|\lambda_f(l')|N_1^{1/2}}{L^{1/2}}+\frac{N_1^{1/2}}{L}\right).
\end{aligned}
\end{equation}Executing the remaining sum, we see that the above is dominated by
\begin{equation}\label{eq735}
 \frac{M_0L^{4/3}}{n_1^2N_1^{1/2}}\times\frac{q^3}{Q^3}\times p^3qr\times \frac{C^2}{q_1}\times N_1L\,,
\end{equation}where we have used the Cauchy-Schwarz inequality and \eqref{rama} for the $l,l'$ sum. \\
For the case with $n=n', l=l'$, using the bound from \eqref{alphacount}, \eqref{nz} reduces to
\begin{equation}\notag
\frac{p^3M_0L^{4/3}}{n_1^2N_1^{1/2}}\times\frac{q^3}{Q^3}\times qr\sum_{l\sim L}|\overline{\lambda_{\pi}(1,l)}|^2\sum_{q_2\sim C/q_1}\sum_{d|q}\sum_{\substack{d'|q\\N_1L\ll p(d,d')}}(d,d')\sum_{n\sim N_1}|\lambda_f(n)|^2.
\end{equation}Using the Ramanujan bound on average for $l$ and $n$ sum, and executing the remaining sum trivially, we see that this case produces the same bound as \eqref{eq735}.

Note that when $N_1$ is replaced with $N_0=p^{1+\epsilon}L$, and $C$ with $Q=(NL/p)^{1/2}$, \eqref{eq735} differs from \eqref{7.17} by a factor of $(Qp)/(N_0L)\sim N^{1/2}/(p^{1/2}L^{3/2})$. Hence multiplying its square root with the first term of the lemma, we see that this case contributes 
\begin{equation}\label{eq28}
N^{3/4}p^{3/4}r^{1/2}/L^{1/4}\,
\end{equation} towards \eqref{eq14}, which is the third term of the lemma. This completes the proof the lemma.
\end{proof} 
\begin{lemma}\label{ndd}
We have
\begin{equation*}
\hbox{$\sum_{\mathcal{A}} \,,\,\,\sum_{\mathcal{B}}\ll N^{3/4}p^{1/2}L^{3/4}r^{1/2}$}\,.
\end{equation*}
\end{lemma}	
\begin{proof}
For $\sum_{\mathcal{A}}$, substituting the second bound from Lemma \ref{dc}, we see that the contribution of this case to $\Omega$ is bounded by
\begin{equation}\label{sigmaA}
\begin{aligned}
&\frac{p^2qrM_0L^{4/3}}{n_1^3N_1^{1/2}}\frac{q^3}{Q^3}\mathop{\sum}_{l,l'\sim L}|\overline{\lambda_{\pi}(1,l)}\lambda_{\pi}(1,l')|\sum_{\substack{q_2\sim C/q_1\\(q,pl)=1\\(q,pl')=1}}\sum_{d|q}\sum_{d'|q}(d,d')\sum_{n\sim N_1}\sum_{\substack{n'\sim N_1\\(d,d')|(nl'-n'l)}} |\lambda_f(n)\lambda_f(n')|.
\end{aligned}
\end{equation}We again appeal to the inequality
\begin{equation}\notag
|\lambda(n)\overline{\lambda(n')}|\ll |\lambda(n)|^2+|\lambda(n')|^2
\end{equation}and work out the details only for the first term. We obtain that \eqref{sigmaA} is bounded by
\begin{equation}\notag
\frac{p^{2}qrM_0L^{4/3}}{n_1^3N_1^{1/2}}\frac{q^3}{Q^3}\mathop{\sum\sum}_{l,l'\sim L}|\overline{\lambda_{\pi}(1,l)}\lambda_{\pi}(1,l')|\sum_{\substack{q_2\sim C/q_1\\(q,pl)=1\\(q,pl')=1}}\sum_{d|q}\sum_{d'|q}(d,d')\sum_{n\sim N_1}\sum_{\substack{n'\sim N_1\\(d,d')|(nl'-n'l)}}|\lambda(n)|^2\,.
\end{equation}Counting the number of $n'$ with given constraint and executing the $n$ sum using the Ramanujan bound on average, we get
\begin{equation*}
\begin{aligned}
&\ll \frac{p^{2}qrM_0L^{4/3}}{n_1^3N_1^{1/2}}\times \frac{q^3}{Q^3}\mathop{\sum\sum}_{l,l'\sim L}|\overline{\lambda_{\pi}(1,l)}\lambda_{\pi}(1,l')|\sum_{q_2\sim C/q_1}\sum_{d|q}\sum_{d'|q}(d,d')\left(N_1+\frac{N_1^2}{(d,d')}\right)\,,\\
&\ll \frac{p^{2}CrM_0L^{2+4/3}}{n_1^3N_1^{1/2}}\times \frac{q^3}{Q^3}\left(\frac{C^2N_1}{q_1}+\frac{CN_1^2}{q_1}\right),
\end{aligned}
\end{equation*}which is smaller than the first bound in \eqref{sl} by a factor of $p^{1/2}N_2q_1$. Consequently, $\sum_{\mathcal{A}}$ gets absorbed by the bound in the second line of \eqref{sl}.\\
\\
\noindent
Finally, for $\sum_{\mathcal{B}}$, recall that $p|n_2, n_2\neq 0$. The number of such $n_2$ is $\ll N_2/p$. Using the earlier obtained bounds for $\mathcal{C}_2, \mathcal{C}_3$ from Lemma \ref{c2} and Lemma \ref{c3}, and doing the same calculation as in \eqref{eq38}-\eqref{6.29}, with the only changes $p^{3}$ instead of $p^{5/2}$, and $N_2/p$ instead of $N_2$, we arrive at a bound for $\sum_{\mathcal{B}}$ which is smaller than \eqref{sl} by factor of $p^{1/2}$.
\end{proof}

\section{The special cases }
In this section we discuss the cases complementary to \eqref{210}. Treatment of these cases is similar to the general case above and yields smaller contributions. 
\subsection{$u=0$ and\,$(pl,q)=1$} In this case, the conductor of the GL(3) variable drops and we get huge saving from the Voronoi summation formulas itself. Hence, trivially estimating the dual sum which arises on applying the Voronoi formulae is enough for this case.
The $m$ sum in \eqref{eq5} becomes
\begin{equation}
\sum_{m}\lambda_{\pi}(r,m)e\left(\frac{ma}{q}\right)e\left(\frac{mx}{pqQ}\right)V\left(\frac{m}{lN}\right)
\end{equation}Hence, after the application of the $GL(3)$ Voronoi formula, the $m$ sum \eqref{msum} changes to
\begin{equation}
\begin{aligned}
\frac{(Nl)^{2/3}}{qr^{2/3}}\sum_{\pm}\sum_{n_1|qr}n_1^{1/3}\sum_{n_2=1}^{\infty}\frac{\lambda_{\pi}(n_1,n_2)}{n_2^{1/3}}&S\left(r\overline{a},\pm n_2;qr/n_1\right)\\ &\times
\int_{\mathbb{R}}V(z)e\left(\frac{Nlxz}{pqQ}\pm \frac{3(Nln_1^2n_2z)^{1/3}}{qr^{1/3}}\right)dz
\end{aligned}
\end{equation}where now the dual variables restrict up to $n_1^2n_2\ll M_0$, where $M_0=(rN^2L^2)/(p^3Q^3)=(rN^{1/2}L^{1/2})/p^{3/2}$. The treatment for the $GL(2)$ sum remains the same (with $u=0$). So the contribution of this case to $S_r(N)$ sum in \eqref{comb} is
\begin{equation}\label{83}
\begin{aligned}
\frac{N^{\frac{3}{4}+\frac{2}{3}}}{g_{\bar{\chi}}p^{\frac{3}{2}}r^{\frac{2}{3}}QL}\sum_{l\in\mathscr{L}}\overline{\lambda_{\pi}(1,l)}l^{\frac{2}{3}}\sum_{1\leq q\leq Q}\frac{1}{q^{5/2}}&\sum_{n_1|qr}n_1^{1/3}\sum_{n_2\ll\frac{M_0}{n_1^2}}\frac{\lambda_{\pi}(n_1,n_2)}{n_2^{1/3}}\\
&\sum_{n\ll N_0}\frac{\lambda(n)}{n^{1/4}}C_2(n\bar{l},n_1,n_2,q)\,\,I(n_1^2n_2,n,q)\,
\end{aligned}
\end{equation}where $N_0=pL$ as earlier, and
\begin{equation}
I(m,n,q)=\int\int\int g(q,x)V(z)U(y)e\left(\frac{Nlx(z-y)}{pqQ}+\frac{2\sqrt{nNy}}{pq}+\frac{3(Nlmz)^{1/3}}{qr^{1/3}}\right)dy\,dz\,dx\,,
\end{equation}and
\begin{equation}
C_2(n,n_1,n_2,q)=\sideset{}{^*}\sum_{a (q)}S(r\bar{a},n_2,qr/n_1)C_1(n,a,q)\,,
\end{equation}where
\begin{equation}
C_1(n,a,q)=\sum_{b(p)}\bar{\chi}(b)e\left(\frac{n(\overline{ap-b\bar{l}q})}{pq}\right)=e\left(\frac{n\bar{a}\bar{p}^2}{q}\right)\bar{\chi}(-n\bar{q}^2l)g_{\chi}\,,
\end{equation}so that
\begin{equation}
C_2(n,n_1,n_2,q)=\bar{\chi}(-n\bar{q}^2l)g_{\chi}\sideset{}{^*}\sum_{\alpha (qr/n_1)}e\left(\frac{\bar{\alpha}n_2n_1}{qr}\right)\left(\sum_{\substack{d|q\\n_1\alpha=-n\bar{p}^2\bmod d}}d\mu(q/d)\right)\,.
\end{equation}Hence we get
\begin{equation}
C_2(n\bar{l},n_1,n_2,q)\ll p^{1/2}\sum_{d|q}d\sideset{}{^*}\sum_{\substack{\alpha (qr/n_1)\\n_1\alpha=-n\bar{l}\bar{p}^2\bmod d}}1\ll \frac{p^{1/2}qr}{n_1}\sum_{d|q}(d,n)\ll\frac{p^{1/2}qr(q,n)}{n_1}\,.
\end{equation}Following the arguments used in the proof of Lemma \ref{int}, \eqref{u} and the comment before \eqref{5.24} in particular, we get
\begin{equation}
I(m,n,q)\ll pqQ/NL\,.
\end{equation}Substituting these two bounds, we see that the second line of \eqref{83} is bounded by 
\begin{equation}
\frac{p^{3/2}Qq^2r}{NLn_1}\sum_{n\ll N_0}\frac{|\lambda(n)|(q,n)}{n^{1/4}}.
\end{equation}Consequently, \eqref{83} is bounded by
\begin{equation}
\frac{N^{17/12}L^{2/3}r}{ NL^2p^{1/2}r^{2/3}}\sum_{l\in\mathscr{L}}|\overline{\lambda_{\pi}(1,l)}|\sum_{n_1\ll Qr}\frac{1}{n_1^{2/3}}\sum_{n_2\ll M_0/n_1^2}\frac{|\lambda_{\pi}(n_1,n_2)|}{n_2^{1/3}}\sum_{n\ll N_0}\frac{|\lambda(n)|}{n^{1/4}}\sum_{\substack{q\leq Q\\ \frac{n_1}{(n_1,r)}|q}}\frac{(q,n)}{q^{1/2}}.
\end{equation}The last $q$-sum is bounded by $ Q^{1/2}$. By partial summation and the Ramanujan bound on average \eqref{rama}, the $n$-sum is then bounded by $N_0^{3/4}$. Substituting these two bounds, we get
\begin{equation}\label{812}
\frac{N^{17/12}L^{2/3}N_0^{3/4}Q^{1/2}r}{ NL^2p^{1/2}r^{2/3}}\sum_{l\in\mathscr{L}}|\overline{\lambda_{\pi}(1,l)}|\sum_{n_1^2n_2\ll M_0}\frac{|\lambda_{\pi}(n_1,n_2)|}{(n_1^2n_2)^{1/3}}
\end{equation}Using the Ramanujan bound on average \eqref{rama} and partial summation, we see that the last sum in \eqref{812} is bounded by $M_0^{2/3}$. Also, the sum over $\mathscr{L}$ is bounded by $L$ by \eqref{rama}. Substituting $M_0=p^{-3/2}N^{1/2}L^{1/2}r, N_0=pL$ and $Q=\sqrt{(NL)/p}$, we see that the contribution of this case towards $S_r(N)$ in \eqref{comb} is bounded by 
\begin{equation}
\frac{rNL}{p}\ll N^{3/4}p^{1/2}L^{3/4}r^{1/2}
\end{equation}where the last inequality holds since $N\ll p^3/r^2$ and $L<p$ (see \eqref{par}, \eqref{eq2.8}). Recall that the term on the right hand side is the earlier bound from \eqref{nd}.

\subsection{$(pl,q)>1$}In this subsection, we will focus only on the character sum changes and pass over the various technical points that have been already presented in full detail in the above sections.

Note that since $p,l$ are primes and $q\leq Q=(NL/p)^{1/2}\leq pL^{1/2}$, either $l|q$ or $p|q$. In the former case, we replace $q$ with $q'=q/l\sim C/l$ and proceed the same as the general case above to obtain a smaller contribution. Except for a minor change in the $GL(2)$ Voronoi summation, the treatment for everything else is pretty much the same, and we omit the details for the sake of simplicity. We proceed with the details for the latter case.
\subsubsection{$p|q$ and $(l,q)=1$}Writing $q= pq'$, where 
\begin{equation}
q'\leq Q/p \leq L^{1/2}< p^{1/2},
\end{equation}
\eqref{eq5} is now of the form
\begin{equation}\label{815}
\begin{aligned}
\frac{1}{p^2QL}\sum_{u\bmod p}\sum_{q'\leq Q/p }\frac{1}{q'}\,\,\sideset{}{^*}\sum_{a\bmod pq'}\sum_{l\sim L}\overline{\lambda_{\pi}(1,l)}&\sum_{m\sim NL}\lambda_{\pi}(r,m)e\left(\frac{m(a+upq')}{p^2q'}\right)\\
&\sum_{n\sim N}\lambda(n)\chi(n)e\left(\frac{-nl(a+upq')}{p^2q'}\right).
\end{aligned}
\end{equation} Replacing $a+upq'\mapsto a \,\,(\bmod p^2q') $, \eqref{815} becomes
\begin{equation}\label{816}
\begin{aligned}
\frac{1}{p^2QL}\sum_{q'\leq Q/p }\frac{1}{q'}\,\,\sideset{}{^*}\sum_{a\bmod\,\,p^2q'}\sum_{l\sim L}\overline{\lambda_{\pi}(1,l)}.\overline{\chi(l)}&\sum_{m\sim NL}\lambda_{\pi}(r,m)e\left(\frac{ma}{p^2q'}\right)\\
&\sum_{n\sim N}\lambda(n)\chi(nl)e\left(\frac{-nla}{p^2q'}\right).
\end{aligned}
\end{equation}The $GL(3)$ and $GL(2)$ Voronoi formulae transforms the $m$ and $n$ sum essentially into
\begin{equation}
\frac{NL}{(p^2q')^2r^{2/3}}\sum_{\tilde{m}\ll (p^2q')^3/(NL)}\lambda_{\pi}(1,\tilde{m})S(r\bar{a},\pm \tilde{m}; p^2q'r)
\end{equation}and
\begin{equation}
\frac{N}{p^2q'g_{\bar{\chi}}}\sideset{}{^*}\sum_{b (p)}\bar{\chi}(b)\sum_{\tilde{n}\ll (p^2q')^2/N}\lambda(\tilde{n})e\left(\frac{-\tilde{n}\overline{l(a-bpq')}}{p^2q'}\right)
\end{equation}respectively. Hence, \eqref{816} essentially becomes
\begin{equation}\label{819}
\frac{N^2}{p^8Qg_{\bar{\chi }}r^{2/3}}\sum_{q'\leq Q/p }\frac{1}{q'^4}\,\,\sum_{l\sim L}\overline{\lambda_{\pi}(1,l)}.\overline{\chi(l)}\sum_{\tilde{m}\sim (p^2q')^3/(NL) }\,\,\sum_{\tilde{n}\ll (p^2q')^2/N}\lambda_{\pi}(1,\tilde{m})\lambda(\tilde{n})\mathfrak{C}\,\,,
\end{equation}where the character sum $\mathfrak{C}$ is given by
\begin{equation}\label{chardef}
\mathfrak{C}= \sideset{}{^*}\sum_{b (p)}\sideset{}{^*}\sum_{a (p^2q')}\bar{\chi}(b)e\left(\frac{-\tilde{n}\overline{l(a-bpq')}}{p^2q'}\right)S(r\bar{a},\pm \tilde{m}; p^2q'r).
\end{equation}
We have the following reduction of $\mathfrak{C}$ into additive character with respect to the $\tilde{m}$ variable :
\begin{lemma}\label{•}
We have
\begin{equation}\label{821}
\mathfrak{C}=p^2g_{\bar{\chi}}\chi(-\overline{\tilde{n}^3r^2}l^3\tilde{m}^2)\sideset{}{^*}\sum_{x_1 (q'r)}f(x_1, \tilde{n}\bar{l}; q')e\left(\frac{\pm\tilde{m}(p^2\bar{p}^2\bar{x_1}+q'r\overline{q'r}\overline{\tilde{n}}l)}{p^2q'r}\right),
\end{equation}where
\begin{equation}\label{fdef}
f(x,n,q)= \sideset{}{^*}\sum_{a (q)}e\left(\frac{a(x-n)}{q}\right)=\sum_{\substack{d|q\\ x= n (\bmod d)}}d\mu(q/d).
\end{equation}
\end{lemma}
\begin{proof}
Firstly, note that
\begin{equation}\notag
\overline{a-bpq'}=\bar{a}+\bar{a}^2bpq'\,(\bmod p^2q').
\end{equation}Substituting and executing the $b$-sum in \eqref{chardef}, which is a gauss sum, we obtain
\begin{equation}\notag
\begin{aligned}
\mathfrak{C}&=\sideset{}{^*}\sum_{b (p)}\sideset{}{^*}\sum_{a (p^2q')}\bar{\chi}(b)e\left(-\frac{\tilde{n}\overline{la}}{p^2q'}-\frac{\tilde{n}b\overline{la^2}}{p}\right)S(r\bar{a},\pm \tilde{m}; p^2q'r)\\
&=g_{\bar{\chi}}\chi(-\tilde{n}\overline{l})\sideset{}{^*}\sum_{a (p^2q')}\chi(\bar{a}^2)e\left(-\frac{\tilde{n}\overline{la}}{p^2q'}\right)S(r\bar{a},\pm \tilde{m}; p^2q'r).
\end{aligned}
\end{equation}Opening up the Kloosterman sum we obtain
\begin{equation}\label{CC}
\begin{aligned}
\mathfrak{C}&=g_{\bar{\chi}}\chi(-\tilde{n}\overline{l})\sideset{}{^*}\sum_{a (p^2q')}\,\,\sideset{}{^*}\sum_{x (p^2q'r)}\chi(\bar{a}^2)e\left(-\frac{\tilde{n}\overline{q'la}}{p^2}-\frac{\tilde{n}\overline{p^2la}}{q'}+\frac{x\overline{q'a}}{p^2}+\frac{x\overline{p^2a}}{q'}\pm\frac{\bar{x}\tilde{m}\overline{q'r}}{p^2}\pm\frac{\bar{x}\tilde{m}\overline{p^2}}{q'r}\right)\\
&=g_{\bar{\chi}}\chi(-\tilde{n}\overline{l})\cdot\mathcal{C}_2\sideset{}{^*}\sum_{x_1 (q'r)}\mathcal{C}_1(x_1)e\left(\pm\frac{\bar{x_1}\tilde{m}\overline{p^2}}{q'r}\right),
\end{aligned}
\end{equation}where
\begin{equation}\label{CC1}
\mathcal{C}_1(x_1):= \sideset{}{^*}\sum_{a_1(q')}e\left(\frac{-\tilde{n}\overline{p^2la_1}}{q'}+\frac{x_1\overline{p^2a_1}}{q'}\right),
\end{equation}and
\begin{equation}\label{C2}
\mathcal{C}_2:=\sideset{}{^*}\sum_{x_2(p^2)}e\left(\pm\frac{\bar{x_2}\tilde{m}\overline{q'r}}{p^2}\right)\sideset{}{^*}\sum_{a_2(p^2)}\chi(\bar{a_2}^2)e\left(-\frac{\tilde{n}\overline{q'la_2}}{p^2}+\frac{x_2\overline{q'a_2}}{p^2}\right).
\end{equation}Note that by definition, $\mathcal{C}_1(x_1)=f(x_1, \tilde{n}\bar{l}; q')$, where $f$ defined as in \eqref{fdef}. To evaluate $\mathcal{C}_2$, we write $\bar{a_2}=p\alpha+\beta\bmod p^2$, where $\alpha=0,1,\cdots, p-1$ and $\beta=1,2,\cdots, p-1$, to see that the $a_2$ sum in \eqref{C2} becomes
\begin{equation}\notag
\begin{aligned}
&\sideset{}{^*}\sum_{\beta (p)}\chi(\beta^2)e\left(\frac{(-\tilde{n}\overline{q'l}+x_2\overline{q'})\beta}{p^2}\right)\sum_{\alpha (p)}e\left(\frac{(-\tilde{n}\overline{q'l}+x_2\overline{q'})\alpha}{p}\right)\\
&=p\delta_{x_2=\tilde{n}\bar{l}\bmod p}\sideset{}{^*}\sum_{\beta (p)}\chi(\beta^2)e\left(\frac{((-\tilde{n}\overline{q'l}+x_2\overline{q'})/p)\beta}{p}\right)\\
&=pg_{\chi^2}\cdot\delta_{x_2=\tilde{n}\bar{l}\bmod p}\cdot\bar{\chi}^2\left(\frac{-\tilde{n}\overline{q'l}+x_2\overline{q'}}{p}\right).
\end{aligned}
\end{equation}Writing $x_2=p\gamma+\tilde{n}\bar{l}, \gamma=0,1,\cdots,p-1$, and substituting the last expression into \eqref{C2}, we obtain
\begin{equation}\label{finalC2}
\begin{aligned}
\mathcal{C}_2&=pg_{\chi^2}\chi^2(q'^2)\sum_{\gamma (p)}\bar{\chi}^2(\gamma)e\left(\pm\frac{\tilde{m}\overline{q'r}\overline{(p\gamma+\tilde{n}\bar{l})}}{p^2}\right)\\
&=pg_{\chi^2}\chi^2(q'^2)\sum_{\gamma (p)}\bar{\chi}^2(\gamma)e\left(\pm\frac{\tilde{m}l\overline{q'r\tilde{n}}(1+p\overline{\tilde{n}}l\gamma)}{p^2}\right)\\
&=pg_{\chi^2}g_{\bar{\chi}^2}\chi^2(q'^2)\chi^2(\tilde{m}l^2\overline{q'r\tilde{n}^2})e\left(\pm\frac{\tilde{m}l\overline{q'r\tilde{n}}}{p^2}\right).
\end{aligned}
\end{equation}The lemma follows after substituting \eqref{finalC2} and \eqref{CC1} in \eqref{CC}.
\end{proof}
Like earlier, we now apply the Cauchy-Schwarz inequality to \eqref{819} to get rid of the $GL(3)$ coefficient, but this time we keep the $q'$-sum outside the square and $\tilde{n},l$ sum inside. We get that \eqref{819} is bounded by
\begin{equation}\label{8.23}
\frac{N^2}{p^8Qg_{\bar{\chi}}r^{2/3}}\sum_{q'\leq Q/p }\frac{1}{q'^4}\,\frac{(p^2q')^{3/2}}{(NL)^{1/2}}\,\,\Omega^{1/2},
\end{equation}where
\begin{equation}
\Omega = \sum_{\tilde{m}\sim (p^2q')^3/(NL) }\left|\sum_{l\sim L}\sum_{\tilde{n}\ll (p^2q')^2/N}\overline{\lambda_{\pi}(1,l)}.\overline{\chi(l)}\lambda(\tilde{n})\mathfrak{C}\right|^2
\end{equation}Opening the absolute value square and substituting \eqref{821} we arrive at
\begin{equation}\label{825}
\begin{aligned}
\Omega &=p^4|g_{\bar{\chi}}|^2\sum_{l,l'\sim L}\overline{\lambda_{\pi}(1,l)}\lambda_{\pi}(1,l')\chi(l^2\bar{l'}^2)\sum_{n,n'\ll (p^2q')^2/N }\lambda(n)\overline{\lambda(n')}\chi(\bar{n}^3n'^3)\\
&\qquad\qquad\qquad\times\sum_{x_1 (q'r)}\sum_{x_2 (q'r)}f(x_1,n\bar{l},q')\bar{f}(x_2,n'\bar{l'},q')\\
&\qquad\qquad\qquad\times\sum_{\tilde{m}\sim (p^2q')^3/(NL)}e\left(\frac{\tilde{m}(p^2\bar{p}^2(\bar{x_1}-\bar{x_2})+q'r\overline{q'r}(\overline{n}l-\overline{n'}l')}{p^2q'r}\right)\,.
\end{aligned}
\end{equation}We now apply Poisson summation in the $\tilde{m}$-sum in \eqref{825} with modulus $p^2q'r$. We observe that 
\begin{equation}\label{826}
\frac{(p^2q')^3}{NL}= p^2q'\frac{(p^2q')^2}{NL}\geq p^2q'\frac{p^4}{NL}\geq p^2q'r^2 \frac{p^{1-\epsilon}}{L}\geq p^{\epsilon}p^2q' r,
\end{equation}where we have used $N\leq p^{3+\epsilon}/r^2$ and $L=p^{\eta}, \eta<1$ for the last two inequality in \eqref{826}. Hence, it follows that only the zero-frequency contributes non-negligibly in dual side of the Poisson summation of the $\tilde{m}$ sum in \eqref{825}. In other words,
\begin{equation}
\begin{aligned}
\sum_{\tilde{m}\sim (p^2q')^3/(NL)}&e\left(\frac{\tilde{m}(p^2\bar{p}^2(\bar{x_1}-\bar{x_2})+q'r\overline{q'r}(\overline{n}l-\overline{n'}l')}{p^2q'r}\right)\\
&\ll \frac{(p^2q')^3}{NL}\delta_{p^2\bar{p}^2(\bar{x_1}-\bar{x_2})+q'r\overline{q'r}(\overline{n}l-\overline{n'}l')= 0\,\,(\bmod p^2q'r) }.
\end{aligned}
\end{equation}The above congruence condition implies 
\begin{equation}\label{828}
x_1=x_2,\,\,\,\,\,\text{and}\,\,\,\, nl'-nl=0\,(\bmod {p^2}).
\end{equation}Since $|nl'-nl|\ll(p^2q')^2/N\leq pL^2< p^2$ (see line after \eqref{94}), the second congruence in \eqref{828} implies
\begin{equation}
nl'=n'l.
\end{equation}Hence
\begin{equation}
\begin{aligned}
\Omega &\ll p^4|g_{\bar{\chi}}|^2\frac{(p^2q')^3}{NL}\sum_{l,l'\sim L}|\lambda_{\pi}(1,l)\lambda_{\pi}(1,l')|\sum_{\substack{n,n'\ll (p^2q')^2/N\\nl'=n'l}}|\lambda(n)\lambda(n')|\\
&\qquad\qquad\qquad\times\sum_{x_1 (q'r)}|f(x_1,n\bar{l},q')\bar{f}(x_1,n'\bar{l'},q')|\,.
\end{aligned}
\end{equation}But we have
\begin{equation}
\begin{aligned}
\sum_{x_1 (q'r)}|f(x_1,n\bar{l},q')\bar{f}(x_1,n'\bar{l'},q')|&\leq \sum_{d_1|q'r}\sum_{d_2|q'r}d_1d_2\sideset{}{^*}\sum_{\substack{x_1 (q'r)\\ x_1=n\bar{l} (d_1)\\x_1=n'\bar{l'} (d_2)}}1\\
&\ll \sum_{d_1|q'r}\sum_{d_2|q'r}d_1d_2 \frac{q'r}{[d_1,d_2]}\\
&\ll q'r\sum_{d_1|q'r}\sum_{d_2|q'r}(d_1,d_2)\\
&\ll (q'r)^2.
\end{aligned}
\end{equation}Substituting we get
\begin{equation}\label{8.32}
\begin{aligned}
\Omega &\ll p^4|g_{\bar{\chi}}|^2.\frac{(p^2q')^3}{NL}.(q'r)^2\sum_{l,l'\sim L}|\lambda_{\pi}(1,l)\lambda_{\pi}(1,l')|\sum_{\substack{n,n'\ll (p^2q')^2/N\\nl'=n'l}}|\lambda(n)\lambda(n')|.
\end{aligned}
\end{equation}From the earlier calculations \eqref{zf}-\eqref{725} we have
\begin{equation}
\sum_{\substack{n,n'\ll (p^2q')^2/N\\nl'=n'l}}|\lambda(n)\lambda(n')|\ll \left(\frac{|\lambda_f(l)|p^2q'}{L^{1/2}N^{1/2}}+\frac{p^2q'}{N^{1/2}L}\right)\left(\frac{|\lambda_f(l')|p^2q'}{L^{1/2}N^{1/2}}+\frac{p^2q'}{N^{1/2}L}\right).
\end{equation}Substituting and executing the remaining $l,l'$ sum in \eqref{8.32} using the Cauchy-Schwarz inequality and the Ramanujan bound on average \eqref{rama} we get
\begin{equation}
\Omega\ll p^5.\frac{(p^2q')^3}{NL}. (q'r)^2.\frac{(p^2q')^2}{N}.L=\frac{p^{15}q'^{7}r^2}{N^2}.
\end{equation}Substituting in \eqref{8.23}, we see that the contribution of this case to $S_r(N)$ is bounded by
\begin{equation}
\begin{aligned}
&\quad\frac{N^2}{p^8Q|g_{\bar{\chi}}|r^{2/3}}\sum_{q'\leq Q/p }\frac{1}{q'^4}\,\frac{(p^2q')^{3/2}}{(NL)^{1/2}}\left(\frac{p^{15}q'^{7}r^2}{N^2}\right)^{1/2}\\
&=\frac{N^2}{p^8Q|g_{\bar{\chi}}|r^{2/3}}.\frac{p^{3+15/2}r}{N^{3/2}L^{1/2}}\sum_{q'\leq Q/p}q'\\
&=\frac{N^{1/2}Qr^{1/3}}{L^{1/2}}.
\end{aligned}
\end{equation}Finally, substituting $Q=((NL)/p)^{1/2}$, we get the concluding bound for this subsection to be
\begin{equation}
\frac{Nr^{1/3}}{p^{1/2}}
\end{equation}which gets absorbed into the error term of \eqref{eq212} since $L<p$.

\section{Optimal choice for $L$ and $r$}

Combining \eqref{err1}, Lemma \ref{dd}, Lemma \ref{d} and Lemma \ref{ndd}, we conclude 
\begin{equation}
\begin{aligned}
S_r(N)\ll Nr^{1/2}/L^{1/2}+& N^{1/2}pLr^{1/2}+N^{1/2}pL^{1/2}r^{1/2} \\
&+N^{3/4}p^{1/2}L^{3/4}r^{1/2}+N^{3/4}p^{3/4}r^{1/2}/L^{1/4}\,.
\end{aligned}
\end{equation}
Hence
\begin{equation}
\begin{aligned}
\frac{S_r(N)}{N^{1/2}}\ll N^{1/2}r^{1/2}/L^{1/2}+pLr^{1/2}+N^{1/4}p^{1/2}L^{3/4}r^{1/2}+N^{1/4}p^{3/4}r^{1/2}/L^{1/4}\,.
\end{aligned}
\end{equation}

Using the upper bound $N\ll p^{3+\epsilon}/r^2$ and $r\leq p^{\theta}$ we get
\begin{equation}
\begin{aligned}
&L\left(\frac{1}{2},\pi\times f\times\chi\right)\\
&\ll \sup_{r\leq p^{\theta}}\sup_{\frac{p^{3-\theta}}{r^2}\leq N\leq \frac{p^{3+\epsilon}}{r^2}}\frac{|S_r(N)|}{N^{1/2}}+p^{(3-\theta)/2}\,,\\
&\ll p^{\epsilon}(p^{1+\eta+\theta /2}+p^{5/4+3\eta/4}+p^{3/2-\eta/4}+p^{(3-\theta)/2})\,,
\end{aligned}
\end{equation}where we have assumed $L=p^{\eta} $,  for some $0<\eta<1$. Equating the second and the third term, we choose $\eta=1/4$. We then choose $\theta=1/8$, to see that the second and the third term dominates the first and the last and we finally obtain
\begin{equation}\label{94}
L\left(\frac{1}{2},\pi\times f\times\chi\right)\ll  p^{3/2-1/16+\epsilon}.
\end{equation}Note that the choice $\eta=1/4, \theta=1/8$ is consistent with \eqref{eq2.8}.

\section*{Acknowledgements}
The author would like to thank Prof. Ritabrata Munshi for suggesting the
problem, sharing his ideas and explaining his methods. He would also like
to thank Jyoti Sengupta and Philippe Michel for their helpful comments, Will Sawin for his invaluable help and writing the appendix, and Indian Statistical Institute, Kolkata, for the excellent research environment. He is indebted to the anonymous reviewer for careful reading and many useful comments and suggestions. This is a part of the author's Ph.D. thesis written under the supervision of Prof. R. Munshi and Prof. R. Sridharan.

\vspace{20mm}
\newpage
\section*{\textbf{APPENDIX : A CHARACTER SUM BOUND VIA KATZ'S HYPERGEOMETRIC FUNCTIONS}\\ \vspace{5mm} \small{Will Sawin}}

Let $p$ be a prime.  Let $\chi$ be a non-trivial Dirichlet character mod $p$. For $c_1, c_2, k_1, k_2 \in \mathbb Z$, define

\[\mathcal C := \sideset{}{^*}\sum_{\substack{  \alpha(p)  \\ \alpha \neq - k_2/k_1}} \sideset{}{^*}\sum_{x, y(p)} \chi(x+1) \chi(c_1 \overline{x} + \overline{\alpha} ) \overline{\chi} (y+1) \overline{\chi} ( c_2 \overline{y} + \overline{k_1\alpha + k_2}) .\]

The goal of this appendix will be to prove:

\begin{theorem}\label{appendix-main-thm} 
We have $\mathcal C \ll p^{3/2}$ if $k_2 \neq 0 \mod p$.
\end{theorem}

To that end, define a sum $F_{c} (a) $ by
\[ p^{1/2} F_{c}(a) := \sideset{}{^*}\sum_{x(p)} \chi(x+1) \chi ( c \overline{x} +a ) = \sideset{}{^*}\sum_{x(p)} \chi(1 + \overline{x} ) \chi ( c  +ax  ) \]

so that \begin{equation}\label{CF-relation} \mathcal C = p \sideset{}{^*}\sum_{\substack{  \alpha(p)  \\ \alpha \neq - k_2/k_1}}  F_{c_1} ( \overline{\alpha} ) \overline{ F_{ c_2}} ( \overline{k_1 \alpha + k_2 } ).\end{equation}

\begin{lemma}\label{c-zero}
If $c=0$ then $F_c(a) = O (p^{-1/2})$. 
\end{lemma}	

\begin{proof} We have
\[ p^{1/2} F_{c}(a) := \sideset{}{^*}\sum_{x(p)} \chi(x+1) \chi ( a ) = \chi(a) \sideset{}{^*}\sum_{x(p)} \chi(x+1)  = - \chi(a) = O(1) . \qedhere \] \end{proof}

This estimate will suffice to handle the cases when $c_1=0$ or $c_2=0$. The $c \neq 0$ case is more difficult.

\begin{lemma}\label{sheaf-existence} 
If $c \neq 0$, then $F_c$ is the trace function of a sheaf $\mathcal F_c$ on $\mathbb A^1$. The sheaf $\mathcal F_c$ is lisse away from $0, c, \infty $, is pure of weight $0$, is geometrically irreducible, and has conductor (in the sense of \cite[Definition 4.3]{FKMS}) uniformly bounded. 

Furthermore, the local monodromy representations at $0$ and $c$ have nonzero invariants, while the local monodromy representation at $\infty$ does not.
\end{lemma}	
\begin{proof} 

We have $\chi(c+ax) =\chi(c) \chi ( 1 + (a/c) x )$ so $F_c(a) = \chi(c) F_1(a/c)$. 

We thus restrict attention to the case $c=1$. In this case, the substitution $x \mapsto -x$ gives
\[ p^{1/2} F_{1}(a)  = \sideset{}{^*}\sum_{x(p)} \chi(1 + \overline{x} ) \chi ( 1  +ax  ) = \chi(\overline{x} -1) \chi (  ax -1 ) = f * f \] where $f(x) = \chi(x-1)$ and $*$ denotes multiplicative convolution
\[ f* g( a) = \sideset{}{^*}\sum_{x(p)} f( x ) g ( a \overline{x}).\]

We now relate $f$, and thus $F$, to finite field hypergeometric functions defined by Katz \cite{katz-esde}, and hence to trace functions.

Let $\psi \colon \mathbb F_p \to \mathbb C^\times$ be an additive character. Katz defines \cite[(8.2.7)]{katz-esde} a hypergeometric sum $\operatorname{Hyp} (\psi;  \chi\textrm{'s}; \psi\textrm{'s}) (E, t)$, which, in the special case $\operatorname{Hyp} (\psi; 1;\chi) (\mathbb F_p, t)$, is given by
\[ \operatorname{Hyp} (\psi; 1; {\chi}) (\mathbb F_p, t) = \sum_{ \substack{ x,y (p) \mid x=ty  }}  \psi (x-y) \overline{\chi}(y)  = \sum_{ y(p)} \psi ( (t-1) y ) \overline{\chi}(y) = \sum_{z(p)} \psi ( z) \overline{\chi} ( \overline{(t-1) } z) \] \[= \chi(t-1)  \sum_{z(p)} \psi ( z) \overline{\chi} ( z) = f(t) G(\overline{\chi},\psi) \] where $G(\overline{\chi})$ is a Gauss sum.

By the convolution property of hypergeometric functions \cite[(8.2.3)]{katz-esde}, this gives
\[ \operatorname{Hyp} (\psi; 1,1; \chi, {\chi}) (\mathbb F_p , a) = \operatorname{Hyp} (\psi; 1;\chi)* \operatorname{Hyp} (\psi; 1;\chi) (a) = f* f(a) G(\overline{\chi},\psi)^2\] \[=  p^{1/2} F_1(a)   G(\overline{\chi},\psi)^2 .\]

Hence

\[ F_c(a) = \operatorname{Hyp} (\psi; 1,1; \chi, {\chi}) (\mathbb F_p , a/c) \frac{\chi(c) }{  p^{1/2}G(\overline{\chi},\psi)^2}.\]

 By definition \cite[(8.2.7)]{katz-esde}, $ \operatorname{Hyp} (\psi; 1,1; \chi, {\chi}) (\mathbb F_p , a)$ is the trace function of the sheaf Katz calls $\operatorname{Hyp} (!, \psi; 1,1; \chi, {\chi}) $. Thus $ \operatorname{Hyp} (\psi; 1,1; \chi, {\chi}) (\mathbb F_p , a/c)$ is the trace  function of the pullback of that sheaf along multiplication by $1/c$, which Katz calls $\operatorname{Hyp}_{c} (!, \psi; 1,1; \chi, {\chi}) $  \cite[ (8.2.13)]{katz-esde} (he defines it as the pushforward along multiplication by $c$, but this is equivalent as pushforward along an invertible morphism is equivalent to pullback along the inverse map).
 
  Hence $F_c(a)$ is the trace function of the $\alpha^{\operatorname{degree}}$ twist of $\operatorname{Hyp} _{c} (!, \psi; 1,1; \chi, {\chi}) (\mathbb F_p , a)$, where $\alpha =  \frac{\chi(c) } { p^{1/2}  G(\overline{\chi},\psi)^2}$ satisfies $|\alpha| =p^{-3/2}$.
 
By \cite[Theorem 8.4.2(4)]{katz-esde}, $\operatorname{Hyp} _{c} (!, \psi; 1,1; \chi, {\chi})$ is pure of weight $2+2-1=3$, so its $\alpha^{\operatorname{degree}}$ twist is pure of weight $3-3=0$.

The sheaf is geometrically irreducible by \cite[Theorem 8.4.2(1)]{katz-esde}.

The singularities and local monodromy representations are described by  \cite[Theorem 8.4.2(8)]{katz-esde}. In particular, this implies: $\mathcal F_c$ is lisse away from $0,c$. The local monodromy at $c$ is a transvection, meaning the codimension of its invariant subspace is $1$, so the dimension is $2-1=1>0$. The local monodromy at $0$ is the tensor product of a trivial representation with a unipotent representation and thus has nontrivial invariants. The local monodromy representation at $\infty$ is the tensor product of a nontrivial representation of dimension $1$ with a unipotent representation and thus does not have nontrivial invariants.

 Since the rank is $2$, the number of singularities is $3$, and the local monodromy at each singularity is tame, the conductor in the sense of \cite[Definition 4.3]{FKMS} is bounded by $2+3+0=5$. \end{proof}

\begin{proof}[Proof of Theorem \ref{appendix-main-thm}] 

If $c_1=c_2=0$ then by \eqref{CF-relation} and Lemma \ref{c-zero},
\[ \mathcal C = p \sideset{}{^*}\sum_{\substack{  \alpha(p)  \\ \alpha \neq - k_2/k_1}}  O (1/\sqrt{p} ) \cdot O(1/\sqrt{p} ) \ll  p \ll p^{3/2} .\]

When $c \neq 0$, a sheaf $\mathcal F_c$ as in Lemma \ref{sheaf-existence} is pure of weight $0$ and thus satisfies the standing assumption of \cite[Remark 3.11]{FKMS}. A basic property of trace functions of sheaves that are pure of weight $0$ they are $\ll 1$ is that they are bounded by the rank of the sheaf (and thus by the conductor). Thus, when $c_1=0$ but $c_2\neq 0$, or vice versa, we have
\[ \mathcal C = p \sideset{}{^*}\sum_{\substack{  \alpha(p)  \\ \alpha \neq - k_2/k_1}}  O (1/\sqrt{p} ) \cdot O(1 ) \ll  p^{3/2} .\]
This handles the cases when $c_1=0$ or $c_2=0$. For the remainder, we assume $c_1 ,c_2 \neq 0$.

We first handle the case $k_1 \neq 0\mod p$.  In this case, the pullback of $F_c$ along the rational linear transformations $\alpha \mapsto \overline{\alpha}$ and $\alpha \mapsto \overline{k_1 \alpha + k_2 }$ are thus also trace functions (of sheaves obtained by pulling back $\mathcal F_c$ along the same rational linear transformations \cite[\S8]{FKMS}). Since pullback by rational linear transformations preserves geometric irreducibility, these are also trace functions of geometrically irreducible sheaves. (We need $k_1 \neq 0 \mod p$ for $\alpha \mapsto \overline{k_1 \alpha + k_2 }$ to be a rational linear transformation and not a constant function.)

By the Riemann hypothesis in the form \cite[Equation (5.4)]{FKMS}, the sum \[ \frac{\mathcal C}{p} =  \sideset{}{^*}\sum_{\substack{  \alpha(p)  \\ \alpha \neq - k_2/k_1}}  F_{c_1} ( \overline{\alpha} ) \overline{ F_{ c_2} }( \overline{k_1 \alpha + k_2 } )= O( \sqrt{p})\]   unless the sheaves  $[ \alpha \mapsto \alpha^{-1}]^* \mathcal F_{c_1}$ and $[ \alpha \mapsto (k_1\alpha+k_2)^{-1} ]^* \mathcal F_{c_2}$ are geometrically isomorphic. 

If they were geometrically isomorphic, they would have to have the same local monodromy at each point. Since $\mathcal F_{c_1}$ has nontrivial local monodromy invariants at every finite point but not at $\infty$, its pullback $[ \alpha \mapsto \alpha^{-1}]^* \mathcal F_{c_1}$ has nontrivial local monodromy at every point except the inverse image $0$ of $\infty$. 

Similarly, $[ \alpha \mapsto (k_1\alpha+k_2)^{-1} ]^* \mathcal F_{c_2}$ has nontrivial local monodromy invariants at every point except the inverse image $-k_2/k_1$ of $\infty$.

Thus, the sheaves can only be isomorphic if $0 =-k_2/k_1$, which happens only if $k_2=0$ (mod $p$), as desired.

Now in the case $k_1=0$, the sum $\frac{\mathcal C}{p}$ simplifies as \[ \sideset{}{^*}\sum_{\substack{  \alpha(p)  \\ \alpha \neq - k_2/k_1}}  F_{c_1} ( \overline{\alpha} ) \overline{ F_{ c_2} }( \overline{k_2 } )\] and since $\overline{ F_{ c_2} }( \overline{k_2 } )=O(1)$, it suffices to prove that $\sum^*_{\substack{  \alpha(p)  \\ \alpha \neq - k_2/k_1}}  F_{c_1} ( \overline{\alpha} )  = O(\sqrt{p})$ which, since $[ \alpha \mapsto \alpha^{-1} ]^* \mathcal F_{c_1}$ is geometrically irreducible and nontrivial, follows from another application of the Riemann hypothesis \cite[Corollary 4.7]{FKMS} (or from a more elementary estimate by opening up the sum).

 \end{proof}

 I would like to thank Emmanuel Kowalski, Yongxiao Lin, and Philippe Michel for helpful conversations. This work was supported by NSF grant DMS-2101491.


\begin{thebibliography}{40}
\bibitem{harcos}
V. Blomer, G. Harcos : \textit{Hybrid bounds for twisted $L$-functions}, J. Reine Angew. Math., 621 (2008), 53-79.
\bibitem{blomer}
V. Blomer: \textit{Subconvexity for twisted $L$-functions on GL(3)}, Amer. J. Math., 134 (2012), 1385-1421.
\bibitem{booker}
A. R. Booker, M. Krishnamurthy, and M. Lee : \textit{New integral representations for Rankin-Selberg $L$-functions},  \href{https://arxiv.org/abs/1804.07721}{arXiv:1804.07721}.
\bibitem{burgess}
D. A. Burgess : \textit{On character sums and L-series, II}, Proc. London Math. Soc., 313 (1963), 24-36.
\bibitem{denef}
J. Denef, F. Loeser: \textit{Weights of exponential sums, intersection cohomology, and Newton polyhedra}. Invent. Math., 106 (1991), 275-294.
\bibitem{goldfeld}
D. Goldfeld : \textit{Automorphic forms and $L$-functions for the group $GL(n, R)$}, Cambridge studies in advanced mathematics, 99 (2006).
\bibitem{good}
A. Good : \textit{The square mean of Dirichlet series associated with cusp forms}, Mathematika, 29 (1982), 278-295.
\bibitem{heath}
D. R. Heath-Brown : \textit{Hybrid bounds for Dirichlet $L$-functions}, Invent. Math., 47 (1978), 149-170.
\bibitem{huang}
B. Huang: \textit{On the Rankin-Selberg Problem}, Math. Ann., 381 (2021), 1217-1251.
\bibitem{nelson}
R. Holowinsky and P. D. Nelson : \textit{Subconvex bounds on $GL_3$ via de- generation to frequency zero}, Math. Ann., 372(1-2) (2018), 299-319.
\bibitem{iwaniec}
H. Iwaniec, E. Kowalski : \textit{Analytic number theory}, American Mathematical Society Colloquium Publications, American Mathematical Society, Providence, 53 (2004).
\bibitem{kimsar}
H. H. Kim and P. Sarnak : \textit{Refined estimates towards the Ramanujan and Selberg conjectures}, J. Amer. Math. Soc., 16 (2003), 139-183. Appendix to H. Kim, Functoriality for the exterior square of $GL_4$ and the symmetric fourth of $GL_2$.
\bibitem{kowalski}
E. Kowalski, P. Michel, and J. VanderKam : \textit{Rankin-Selberg $L$-functions in level aspect}, Duke Math. J.,  114 (2002), 123-191.
\bibitem{liu}
Y. Lau , J. Liu , Y. Ye : \textit{A new bound $k^{2/3}+\epsilon$ for Rankin-–Selberg $L$-functions for Hecke congruence subgroups}, IMRP , (2006).
\bibitem{li}
X. Li: \textit{Bounds for $GL(3)\times GL(2)$ $L$-functions and $GL(3)$ $L$-functions}, Ann. of Math. (2), 173 (2011), 301-336.
\bibitem{lin}
Y. Lin, P. Michel, and W. Sawin : \textit{Algebraic twists of $GL_3\times GL_2$ $L$-functions}, \href{https://arxiv.org/abs/1912.09473}{arXiv:1912.09473}
\bibitem{liu1}
J. Liu, Y. Wang, and Y. Ye : \textit{A proof of Selberg’s orthogonality for automorphic $L$-functions}, Manuscripta Math., 118 (2005), 135-149.
\bibitem{maynard}
J. Maynard : \textit{Primes in arithmetic progressions to large moduli I: Fixed residue classes}, \href{https://arxiv.org/abs/2006.06572}{arXiv:2006.06572}.
\bibitem{meurman}
T. Meurman : \textit{On the order of the Maass $L$-function on the critical line}, Number Theory, Colloq. Math. Soc. János Bolyai, I (1987), 325-354.
\bibitem{voronoi}
S. D. Miller and W. Schmid: \textit{Automorphic distributions, $L$-functions, and Voronoi summation for $GL(3)$}, Ann. of Math., 164 (2006), 423-488.
\bibitem{munshi4}
R. Munshi: \textit{Bounds for twisted symmetric square $L$-functions}, J. Reine Angew. Math., 682 (2013), 65-88.
\bibitem{munshi6}
R. Munshi: \textit{Bounds for twisted symmetric square $L$-functions - III}. Adv. Math., 235 (2013), 74-91.
\bibitem{munshi7}
R. Munshi: \textit{The circle method and bounds for $L$-functions - I}. Math. Ann., 358
(2014), 389-401.
\bibitem{munshi2}
R. Munshi: \textit{The cirlce method and bounds for $L$-functions-II: Subconvexity bounds for twists of $GL(3)$ $L$-functions}, American J. Math., 137 (2015) 791-812.
\bibitem{munshi8}
R. Munshi: \textit{The circle method and bounds for $L$-functions - III. t-aspect subconvexity
for GL(3) $L$-functions}. J. Amer. Math. Soc., 28 (2015), 913-938.
\bibitem{munshi9}
R. Munshi: \textit{The circle method and bounds for $L$-functions - IV. Subconvexity for twists of $GL(3)$ $L$-functions}, Ann. of Math. (2), 182 (2015), 617-672.
\bibitem{munshi3}
R. Munshi: \textit{A note on Burgess bound}, Geometry, Algebra, Number Theory, and Their Information Technology Applications, GANITA, Springer Proceedings in Mathematics \& Statistics, 2016.
\bibitem{munshi1}
R. Munshi: \textit{Subconvexity for $GL(3)\times GL(2)$ $L$-functions in $t$-aspect},  J. Eur. Math. Soc., 24 (2022), 1543-1566.
\bibitem{munshi10}
R. Munshi: \textit{Twists of GL(3) $L$-functions}, To appear in ``Relative Trace Formulas'', Simons Symposium,  \href{https://arxiv.org/abs/1604.08000}{arXiv:1604.08000}.
\bibitem{munshi5}
R. Munshi: \textit{Bounds for twisted symmetric square $L$-funtions - II}, (Unpublished)
\bibitem{sarnak}
P. Sarnak : \textit{Estimates for Rankin–-Selberg $L$-functions and quantum unique ergodicity}, J. Funct. Anal., 184 (2001), 419-453.
\bibitem{watson}
G.N. Watson: \textit{A Treatise on the Theory of Bessel Functions}, Reprint of the second (1944) edition, Cambridge University Press (1995).
\bibitem{weyl}
H. Weyl : \textit{{\"U}ber die Gleichverteilung von Zahlen mod. Eins}, Math. Ann., 77 (1916), 313-352.
%\bibitem{fu}
%L. Fu: \textit{Weights of twisted exponential sums}, Math. Z., 262 (2009), 449-472.
\end{thebibliography}

\begin{thebibliography}{50}




\bibitem[FKMS19]{FKMS} \'{E}. Fouvry, E. Kowalski, P. Michel, W. Sawin, (2019). \textit{Lectures on applied $\ell$-adic cohomology}, Contemporary Math. Analytic Methods in Arithmetic Geometry, 740, 113-196.

\bibitem[Ka88]{katz-esde} N. Katz, (1991). \textit{Exponential Sums and Differential Equations}, Annals of Mathematics Studies 124,  Princeton University Press.


\end{thebibliography}
\end{document}